\newcommand{\fR}{\mathbb{R}}
\newcommand{\fZ}{\mathbb{Z}}
\newcommand{\fN}{\mathbb{N}}
\newcommand{\fC}{\mathbb{C}}
\newcommand{\g}{\mathfrak{g}}
\newcommand{\h}{\mathfrak{h}}
\newcommand{\p}{\mathfrak{p}}
\renewcommand{\k}{\mathfrak{k}}
\newcommand{\n}{\mathfrak{n}}
\renewcommand{\a}{\mathfrak{a}}
\newcommand{\frb}{\mathfrak{b}}
\renewcommand{\l}{\mathfrak{l}}
\newcommand{\m}{\mathfrak{m}}
\DeclareMathOperator{\C}{C} 
\DeclareMathOperator{\ad}{ad}
\DeclareMathOperator{\Ad}{Ad}
\DeclareMathOperator{\Id}{Id}
\DeclareMathOperator{\End}{End}
\DeclareMathOperator{\Exp}{Exp}
\DeclareMathOperator{\Lie}{Lie}
\DeclareMathOperator{\Diag}{Diag}
\newtheorem{thm}{Theorem}[section]
\newtheorem{prop}[thm]{Proposition}
\newtheorem{cor}[thm]{Corollary}
\newtheorem{lem}[thm]{Lemma}
\newtheorem{defn}[thm]{Definition}
\newtheorem*{defn*}{Definition}
\newtheorem*{thm*}{Theorem}
\newtheorem*{prop*}{Proposition}
\newtheorem*{cor*}{Corollary}
\theoremstyle{definition}
\newtheorem{ex}[thm]{Example}
\newcommand{\exend}{
    \hbox{}\nobreak\hfill\ensuremath{\lhd}
}
\title{Strongly exponential symmetric spaces}
\author{Yannick Voglaire}
\address{
Department of Mathematics\\
The Pennsylvania State University\\
104 McAllister Bldg\\
University Park, 16802 PA, USA\\
}
\email{\url{yannick.voglaire@gmail.com}}
\thanks{Part of this work was done during the author's PhD thesis at the Université Catholique de Louvain, Belgium, and the Université de Reims, France, under the supervision of Pierre Bieliavsky and Michael Pevzner.
The author thanks Mathieu Carette for discussions on midpoints and geodesics in symmetric spaces.
This work was partially supported for travel by the Belgian Science Policy (belspo) through the Interuniversity Attraction Pole (IAP) PHASE VII/18 project ``DYGEST''. \\
\indent 
The published version of this paper is available at \url{http://imrn.oxfordjournals.org/cgi/content/abstract/rnt149?ijkey=My6gILrUCEG3Z1z&keytype=ref}.}
\begin{document}

\begin{abstract}%
We study the exponential map of connected symmetric spaces and characterize, in terms of midpoints and of infinitesimal conditions, when it is a diffeomorphism, generalizing the Dixmier--Saito theorem for solvable Lie groups. 
We then give a geometric characterization of the (strongly) exponential solvable symmetric spaces as those spaces for which every triangle admits a unique double triangle.
This work is motivated by Weinstein's quantization by groupoids program applied to symmetric spaces.
\end{abstract}

\maketitle

\section{Introduction}

In the late 1950's, Dixmier \cite{dixmier_application_1957} and Saito \cite{saito_sur_1957} characterized the \emph{strongly exponential Lie groups}: the simply connected solvable Lie groups for which the exponential map is a global diffeomorphism. 
They are characterized by the fact that the exponential map is either injective or surjective, or equivalently the fact that the adjoint action of their Lie algebra has no purely imaginary eigenvalue.
In contrast, the exponential map of semisimple Lie groups is never injective, but may be surjective or have dense image.
As a result, the latter two weaker notions of exponentiality have been studied for general Lie groups by a number of authors 
(see \cite{dbarokovi_surjectivity_1997,wuestner_short_2001} for two surveys and for references).
Recently, a number of their results were generalized to symmetric spaces in \cite{rozanov_exponentiality_2009}.
The first main result of the present paper intends to complete the picture by giving a version of the Dixmier--Saito theorem for symmetric spaces (see Theorems~\ref{thm:A}, \ref{thm:B} and \ref{thm:D}).

The second main result gives a geometric characterization of the strongly exponential symmetric spaces which are homogeneous spaces of solvable Lie groups, in terms of existence and uniqueness of \emph{double triangles} (see Theorem~\ref{thm:C}).
This is related to a beautiful theorem of A.~Weinstein \cite{weinstein_traces_1994} relating the graph of multiplication in the pair groupoid of a symplectic symmetric space to the symplectic area of such double triangles.
Weinstein's theorem was developed in the context of his quantization by groupoids program and forms, together with the extension of Bieliavsky's quantization of symplectic symmetric spaces \cite{bieliavsky_strict_2002}, the initial motivation for the present work.

Since we will only be concerned with strong exponentiality in this paper, from now on \emph{we will drop the adjective ``strong''}, although nowadays exponentiality has a weaker meaning in the literature.

\medskip

A \emph{symmetric space} is a manifold $M$ endowed with a family of involutions $\{s_x\}_{x\in M}$ of $M$, called symmetries, 
such that $x$ is an isolated (not necessarily unique) fixed point of $s_x$, and such that $s_x(s_yz)=s_{(s_xy)}(s_xz)$ for all $x,y,z\in M$.
A \emph{morphism} of symmetric spaces $M,N$ is a smooth map $\phi:M\to N$ such that $\phi(s_xy)=s_{\phi(x)}\phi(y)$ for all $x,y\in M$.

Some basic examples to keep in mind are the following:
\begin{enumerate}[label=(Ex\arabic*),leftmargin=*]
\item the $n$-dimensional Euclidean space $\fR^n$ with symmetries $s_xy=2x-y$,
\item the $n$-sphere $S^n$ where the symmetry at $x$ is the restriction to $S^n$ of the axial symmetry about $0x$,
\item the hyperbolic plane $\Pi_2$ with the Riemannian geodesic symmetries,
\item any Lie group $G$ with symmetries $s_gl=gl^{-1}g$,
\end{enumerate}
as well as two solvable two-dimensional examples:
\begin{enumerate}[resume,label=(Ex\arabic*),leftmargin=*]
\item \label{ex:2dim-solvable-exponential} the space $A_2=\fR^2$ with symmetries $s_{(a,b)}(a',b')=(2a-a',2\cosh(a-a')b-b')$,
\item \label{ex:2dim-solvable-non-exponential} the space $B_2=\fR^2$ with symmetries $s_{(a,b)}(a',b')=(2a-a',2\cos(a-a')b-b')$.
\end{enumerate}

The connected symmetric spaces in this sense (due to Loos \cite{loos_symmetric_1969}) coincide with the \emph{homogeneous symmetric spaces}: the homogeneous spaces $G/H$ of connected Lie groups $G$, with $H$ an open subgroup of the fixed points set $G^\sigma$ of an involutive automorphism $\sigma$ of $G$ (called an \emph{involution} in what follows).
The symmetries in that case are given by $s_{gH}lH=g\sigma(g^{-1}l)H$.
Such a triple $(G,H,\sigma)$ will be called a \emph{symmetric triple}. 
When $H$ is the full fixed point subgroup $G^\sigma$, we simply refer to the symmetric pair $(G,\sigma)$. 
A symmetric triple $(G,H,\sigma)$ is said to \emph{realize} a symmetric space $M$ if $G/H\cong M$ as symmetric spaces.

Any connected symmetric space $M$ admits a ``minimal'' realization as a homogeneous symmetric space of its \emph{transvection group}. 
The latter is defined as the subgroup of the automorphism group of $M$ generated by all the products of an even number of symmetries. 
It is a finite dimensional Lie group which is transitive on $M$, stabilized by the conjugation (in the automorphism group) by $s_o$, for any $o\in M$, and minimal for these properties.
A symmetric space is said to be \emph{semisimple} (respectively, \emph{solvable}) if its transvection group is semisimple (respectively, solvable).

Every Lie group $G$ is a symmetric space when endowed with the symmetries $s_{g}l=gl^{-1}g$. It is usually realized by $(G\times G,\Diag(G),\sigma_G)$ with $\sigma_G(g,l)=(l,g)$, where $G\times G$ is in general larger than the transvection group of $G$.
Given an involution $\sigma$ on $G$, two canonical symmetric subspaces of $G$ are defined by $P=\{g\in G \mid \sigma(g)=g^{-1}\}$ and $G_\sigma=\{ g\sigma(g)^{-1} \mid g\in G\}$.
When $G$ is connected, $G_\sigma$ is the connected component of the identity of $P$, and is isomorphic to $G/G^\sigma$ through the \emph{quadratic representation} $Q:G/G^\sigma\to G_\sigma:gG^\sigma\to g\sigma(g)^{-1}$.

A symmetric subspace $N$ of a symmetric space $M\cong G/H$ is said to be \emph{normal} if there exists a $\sigma$-invariant normal subgroup $R\subset G$ such that $N\cong R/(R\cap H)$ through the isomorphism $M\cong G/H$.
Then $M$ is said to be a \emph{semi-direct product} $M_1\ltimes M_2$ of two symmetric subspaces $M_1$ and $M_2$ if $M$ is, as a smooth manifold, a direct product $M_1\times M_2$, and if $M_2$ is normal in $M$.

On the tangent space $\m$ at a given point of a symmetric space $M$, there is a structure of \emph{Lie triple system} $[\cdot,\cdot,\cdot]:\m^3\to \m$, generalizing that of Lie algebra for Lie groups. This trilinear bracket satisfies the properties:
\begin{enumerate}[label=(Lts\arabic*),leftmargin=*]
  \item \label{lts1} $[X,X,Y]=0$,
  \item \label{lts2} $[X,Y,Z]+[Y,Z,X]+[Z,X,Y]=0$,
  \item \label{lts3} $[X,Y,[U,V,W]] = [[X,Y,U],V,W] + [U,[X,Y,V],W] + [U,V,[X,Y,W]]$.
\end{enumerate}
Any Lie algebra $\g$ is a Lie triple system with bracket
\begin{align}
\label{eq:Lie-algebra-LTS}
[X,Y,Z]=[[X,Y],Z], 
\end{align}
and if $G$ is a Lie group with Lie algebra $\g$, this structure makes it the Lie triple system at $e$ of $G$ seen as a symmetric space.
More generally, if $\sigma$ is an involution on $\g$ and if $\g=\m\oplus\h$ is the corresponding decomposition into $(-1)$ and $(+1)$-eigenspaces of $\sigma$, then the bracket \eqref{eq:Lie-algebra-LTS} on $\g$ restricts to $\m$ as a Lie triple system, and any Lie triple system is of that kind.
The Lie triple system at the base point $eH$ of a symmetric space $G/H$ coming from a triple $(G,H,\sigma)$ is given by the bracket \eqref{eq:Lie-algebra-LTS} on the $(-1)$-eigenspace $\m$ of the involution $d\sigma_e$ on $\Lie(G)$.

Just as any connected symmetric space can be realized as a homogeneous symmetric space, any Lie triple system can be realized as the $(-1)$-eigenspace of a Lie algebra involution. The minimal such realization is called the \emph{standard embedding} and is defined (as a vector space) by $\g=\m\oplus[\m,\m]$ where $[\m,\m]$ denotes the span of the operators $L_{X,Y}=[X,Y,\cdot]$ for all $X,Y\in \m$. Conditions \ref{lts1}--\ref{lts3} show that the following equations define a Lie bracket on $\g$:
\begin{align*}
[X,Y] &= L_{X,Y}, \\
[L_{X,Y},Z] &= L_{X,Y}Z, \\
[L_{X,Y},L_{U,V}] &= L_{L_{X,Y}U,V} + L_{U,L_{X,Y}V},
\end{align*}
for all $X,Y,Z,U,V\in \m$. The involution $\sigma$ on $\g$ is $-\Id_\m\oplus\Id_{[\m,\m]}$ and one sees that $(\g,\sigma)$ is the infinitesimal data at the identity of the tranvection group.

On any symmetric space, there is a unique affine connection for which all the symmetries are affine transformations.
This connection is complete, torsion-free, and its curvature is parallel.
The curvature $R$ at a point $x$ on a symmetric space is related to its Lie triple system at that point by 
$R(X,Y)Z=-[X,Y,Z]$.
The exponential map at $x$ is denoted $\Exp_x:T_xM\to M$.
On a symmetric space $G/H$ with Lie triple system $\m\subset\g$ at $eH$, the set $\exp\m\subset G$ is an open submanifold of $P$, so that $\exp\m\subset G_\sigma \subset P$ are open inclusions.

On a symmetric space with a fixed base point $o$, there is a product $\perp$ defined by $x\perp y = s_xs_oy$.
If $M\cong G/H$, one proves that $s_{\Exp_o\frac{X}{2}}s_o=\exp X\in G$ for all $X\in \m$, so that the product satisfies $\Exp_o\frac{X}{2}\perp y=\exp(X)y$. 

A point $z$ is said to be a \emph{midpoint} of $x$ and $y$ if $s_zx=y$.
Our first main result relates the uniqueness of midpoints to the exponentiality of a symmetric space.

\begin{thm}
\label{thm:A}
Let $M$ be a connected symmetric space and $(G,H,\sigma)$ be any symmetric triple realizing it. Denote $[\cdot,\cdot,\cdot]$ its Lie triple system at some base point $o$, and $\m$ the $(-1)$-eigenspace of $\sigma$ in $\Lie(G)$. Then the following conditions are equivalent:
\begin{enumerate}[label=(\arabic*)]
  \item \label{thm:A:at-most-one-midpoint} any two points in $M$ have at most one midpoint;
  \item \label{thm:A:exp-inj} the exponential map at $o$ is injective;
  \item \label{thm:A:exp-diffeo} the exponential map at $o$ is a global diffeomorphism;
  \item \label{thm:A:M-loc-exp} $M$ is simply connected and no operator $Y\mapsto[Y,X,X]$, for $X\in T_oM$, has strictly negative eigenvalues;
  \item \label{thm:A:G-sigma-loc-exp} $M$ is simply connected and no operator $\ad X$, for $X\in \m$, has purely imaginary eigenvalues.  \qedhere
\end{enumerate}
\end{thm}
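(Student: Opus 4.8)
The plan is to prove the cycle of implications \ref{thm:A:exp-diffeo} $\Rightarrow$ \ref{thm:A:exp-inj} $\Rightarrow$ \ref{thm:A:at-most-one-midpoint} $\Rightarrow$ \ref{thm:A:M-loc-exp} $\Leftrightarrow$ \ref{thm:A:G-sigma-loc-exp} $\Rightarrow$ \ref{thm:A:exp-diffeo}, using the transvection group realization to reduce everything to statements about $\Exp_o$ and the linear algebra of the operators $L_{X,X}=[\cdot,X,X]$ on $\m$, so that the hypothesized symmetric triple $(G,H,\sigma)$ may be replaced at will by the standard embedding without loss of generality (the conditions are manifestly realization-independent).

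First I would dispatch the easy implications. \ref{thm:A:exp-diffeo} $\Rightarrow$ \ref{thm:A:exp-inj} is trivial. For \ref{thm:A:exp-inj} $\Rightarrow$ \ref{thm:A:at-most-one-midpoint}: if $z$ is a midpoint of $x$ and $y$, write $x=\Exp_z(-X)$ and use that $s_z$ is an affine transformation fixing $z$ with $ds_z=-\Id$ on $T_zM$, so $y=s_zx=\Exp_z(X)$; two midpoints $z,z'$ of $x,y$ would then, after transporting to $o$ via a transvection, produce two points with the property that the relevant exponentials coincide, contradicting injectivity of $\Exp_o$. (Here one uses the identity $s_{\Exp_o(X/2)}s_o=\exp X$ from the excerpt to convert a would-be second midpoint into a nontrivial coincidence of $\exp$ on $\m$, hence of $\Exp_o$.) For the equivalence \ref{thm:A:M-loc-exp} $\Leftrightarrow$ \ref{thm:A:G-sigma-loc-exp}: on the standard embedding $\g=\m\oplus[\m,\m]$ one has $(\ad X)^2 Y=[X,[X,Y]]=-[Y,X,X]=-L_{X,X}Y$ for $Y\in\m$ and $(\ad X)^2$ preserves the decomposition; since $\ad X$ is $\sigma$-anti-invariant its nonzero spectrum is symmetric, and a purely imaginary eigenvalue $i\lambda$ of $\ad X$ corresponds precisely to a negative eigenvalue $-\lambda^2$ of $(\ad X)^2$, i.e.\ of the restriction to $\m$ a strictly negative eigenvalue of $L_{X,X}=[\cdot,X,X]$. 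Working this out on $[\m,\m]$ as well (where the spectrum of $(\ad X)^2$ is determined by that on $\m$) gives the equivalence; the simple-connectedness clause is common to both.

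The substantive implications are the two remaining ones. For \ref{thm:A:at-most-one-midpoint} $\Rightarrow$ \ref{thm:A:M-loc-exp}: uniqueness of midpoints forces $\Exp_o$ to be injective on each ray (if $\Exp_o X=\Exp_o X'$ with $X\ne X'$, then $\Exp_o(X/2)$ and $\Exp_o(X'/2)$ would be two distinct midpoints of $o$ and $\Exp_o X$), which in turn forces $M$ to be simply connected (a nontrivial loop at $o$ lifts to a failure of injectivity of $\Exp_o$ along geodesics, producing non-unique midpoints), and rules out negative eigenvalues of $L_{X,X}$: if some $L_{X_0,X_0}$ had eigenvalue $-\lambda^2<0$, then — exactly as in example \ref{ex:2dim-solvable-non-exponential} — the geodesic $t\mapsto\Exp_o(tX_0)$ would, in the $2$-dimensional $\ad X_0$-invariant subsystem, behave like the non-exponential cosine example, yielding a one-parameter family of distinct midpoints for a suitable pair; making this precise amounts to integrating $\ad X_0$ and observing the periodicity of the relevant Jacobi fields. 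Finally, \ref{thm:A:G-sigma-loc-exp} $\Rightarrow$ \ref{thm:A:exp-diffeo} is the Dixmier–Saito-type heart of the matter: on the transvection group $G$ with Lie algebra $\g=\m\oplus[\m,\m]$, the hypothesis says no $\ad X$ ($X\in\m$) has purely imaginary eigenvalues; one shows the solvable-radical part of $G$ is strongly exponential in the group sense (Dixmier–Saito) and the semisimple part contributes a Riemannian-symmetric-space factor of noncompact type (no compact factor, since a compact factor would give an $\ad X$ with purely imaginary spectrum), on which $\Exp_o$ is a diffeomorphism by Cartan–Hadamard; combined with simple connectedness and a fibration/induction over the Levi decomposition compatible with $\sigma$, one concludes $\Exp_o$ is a global diffeomorphism.

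The main obstacle I anticipate is the step \ref{thm:A:G-sigma-loc-exp} $\Rightarrow$ \ref{thm:A:exp-diffeo}: one must genuinely combine the solvable (Dixmier–Saito) and semisimple (Cartan–Hadamard) mechanisms through the $\sigma$-compatible Levi decomposition of the transvection group, control how $\Exp_o$ interacts with the resulting semidirect-product structure of $M$ (cf.\ the normal symmetric subspaces and semidirect products set up in the introduction), and handle the non-split cases where the radical and the Levi factor interact in $\ad X$ for mixed $X\in\m$. The other implications are either formal or local Jacobi-field computations modeled on the two $2$-dimensional examples.
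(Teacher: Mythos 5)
Your overall architecture is reasonable, and you correctly identify (5) $\Rightarrow$ (3) as the Dixmier--Saito-type core: the paper indeed proves it by induction on the dimension of the radical, through a $\sigma$-compatible Levi decomposition of the (simply connected cover of the) transvection group, Benoist's result for exponential symmetric pairs, and the semisimple case, where local exponentiality forces $\sigma$ to coincide with a Cartan involution. But your cycle does not close because of the step (2) $\Rightarrow$ (1). Your argument begins ``write $x=\Exp_z(-X)$,'' which presupposes that $x$ lies in the image of $\Exp_z$; injectivity gives no surjectivity at this stage, and the exponential map of an affine symmetric space need not be surjective. A direct proof of (2) $\Rightarrow$ (1) is essentially as hard as (2) $\Rightarrow$ (3). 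The paper avoids this by orienting the cycle the other way: (1) $\Rightarrow$ (2) (exactly your parenthetical square-root bisection argument), then (2) $\Rightarrow$ (4) $\Rightarrow$ (3), and finally (3) $\Rightarrow$ (1), which is easy precisely because one then has a global diffeomorphism and hence surjectivity. As written, nothing in your scheme derives (1) from the other conditions, so the five statements are not shown to be equivalent.

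There are also two substantive gaps in your (1) $\Rightarrow$ (4). First, ``a nontrivial loop at $o$ lifts to a failure of injectivity of $\Exp_o$ along geodesics'' is not justified: a nontrivial class in $\pi_1(M)$ need not be represented by a geodesic loop in an affine symmetric space. The paper instead invokes the Koh--Loos fibration of any connected symmetric space over a compact Riemannian symmetric subspace with Euclidean fibers, and kills the base by noting that no nontrivial compact symmetric space has injective exponential maps. Second, your plan to exhibit non-unique midpoints from a negative eigenvalue of $L_{X_0,X_0}$ by reducing to the two-dimensional cosine example \ref{ex:2dim-solvable-non-exponential} fails in general, because the span of $X_0$ and an eigenvector need not be closed under the triple bracket (that reduction is only carried out in the paper for solvable spaces, in the proof of Theorem~\ref{thm:D}, using Jordan--H\"older series). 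What Helgason's formula actually gives is only that $d(\Exp_o)_{tX_0}$ is singular for suitable $t$, and passing from a singular differential to non-injectivity requires the paper's separate lemma that an injective $\Exp_o$ is automatically an immersion, proved via the isotropy orbit $\{e^{t\ad[X,Y]}\cdot X\}$. Finally, a small but logically relevant sign slip: with the paper's conventions, $[Y,X,X]=[[Y,X],X]=(\ad X)^2Y$ on $\m$, not $-(\ad X)^2Y$; the correspondence between purely imaginary eigenvalues of $\ad X$ and strictly negative eigenvalues of $L_{X,X}$ only comes out right with the correct sign, and the transfer of eigenvalues of $(\ad X)^2$ between $\m$ and $\h$ is made precise in the paper by a Sylvester-determinant argument rather than by inspection.
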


In particular, if every two points have at most one midpoint, then the space is exponential so that they actually have exactly one midpoint, and the map $\gamma:M\times M\to M$ sending two points to their unique midpoint is smooth.

By considering the symmetric space $M=(G\times G)/G$ associated to a connected Lie group $G$, in which case $[X,Y,Z]=[[X,Y],Z]$ for all $X,Y,Z\in\Lie(G)$, we recover the Dixmier--Saito theorem about the exponential map of solvable Lie groups.

A symmetric space satisfying any of the conditions of Theorem~\ref{thm:A} is said to be \emph{exponential}.

Examples \ref{ex:2dim-solvable-exponential} and \ref{ex:2dim-solvable-non-exponential} represent the two isomorphism classes of non-flat solvable two-dimensional symmetric spaces, and are realized by $(G,\sigma)$ where $G$ is respectively $SO(1,1)\ltimes \fR^2$ and the universal cover of $SO(2)\ltimes \fR^2$, and $\sigma$ is the lift to $G$ of the Lie algebra automorphism 
$-\Id\times\sigma_2$, with $\sigma_2(x,y)=(y,x)$.
It is easily seen that the first is exponential while the second is not.

\begin{thm}
\label{thm:B}
If $M$ is an exponential symmetric space, $G$ is its transvection group and $\sigma$ the conjugation by $s_o$ for some $o\in M$, then $G^\sigma$ is connected, $M$ is realized by $(G,G^\sigma,\sigma)$, and
 \begin{enumerate}[label=(\arabic*)]
   \item \label{item:semisimplemidpointmap-implies-cartaninvolution} $M$ is semisimple if and only if it is a Riemannian symmetric space of the non\-compact type,
   \item \label{item:solvablemidpointmap-implies-exists-exponentialsymmetricpair} $M$ is solvable if and only if $G$ is an exponential Lie group,
   \item \label{item:midpointmap-semidirect-product} $M$ is a semi-direct product $M_1\ltimes M_2$ with $M_1$ and $M_2$ respectively of the type \ref{item:semisimplemidpointmap-implies-cartaninvolution}\ and \ref{item:solvablemidpointmap-implies-exists-exponentialsymmetricpair}, and the map $\m_1\times \m_2\to M:(X,Y) \mapsto \Exp_o \frac{X}{2} \perp \Exp_o Y$ is a diffeomorphism. \qedhere
 \end{enumerate}
\end{thm}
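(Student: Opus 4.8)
The plan is to prove first that $G^\sigma$ is connected and that $M\cong G/G^\sigma$, then to settle the semisimple and solvable cases, and finally to deduce part~\ref{item:midpointmap-semidirect-product} from a $\sigma$-invariant Levi decomposition; throughout I will use, via Theorem~\ref{thm:A}, that $M$ is simply connected, that $\Exp_o\colon\m\to M$ is a diffeomorphism, and that $\g=\m\oplus\h$ with $\h=[\m,\m]$. For the first point I would begin by showing $x\mapsto s_x$ injective: if $s_p=s_q$, then along the geodesic $t\mapsto\Exp_p(tv)$ joining $p$ to $q=\Exp_p(v)$ one has $\Exp_p(-v)=s_p(q)=s_q(q)=q=\Exp_p(v)$, so $v=0$ and $p=q$; since $s_{\Exp_o(X/2)}s_o=\exp X$, it follows that $\exp\colon\m\to G$ is injective. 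Combining this with $\exp(X)\cdot o=\Exp_o(X)$ and $\exp(X)\sigma(\exp X)^{-1}=\exp(2X)$ gives $G_\sigma=\exp(\m)$, hence $G/G^\sigma\cong\exp(\m)$ via the quadratic representation; precomposing the covering $M=G/G^\sigma_0\to G/G^\sigma$ with the surjection $\Exp_o$ turns it (up to rescaling) into the injective map $\exp|_\m$, so the covering is trivial, $G^\sigma=G^\sigma_0$, and $(G,G^\sigma,\sigma)$ realizes $M$.

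For part~\ref{item:semisimplemidpointmap-implies-cartaninvolution} I would assume $\g$ semisimple and pick a Cartan involution $\theta$ commuting with $\sigma$, refining $\g=\h\oplus\m$ by the Cartan decomposition $\g=\k\oplus\p$. A nonzero $X\in\k\cap\m$ would make $\ad X$ a nonzero skew-symmetric operator for $-B(\cdot,\theta\cdot)$ ($B$ the Killing form), hence with a nonzero purely imaginary eigenvalue, contradicting condition~\ref{thm:A:G-sigma-loc-exp} of Theorem~\ref{thm:A}; so $\m\subseteq\p$, and then $\h=[\m,\m]\subseteq[\p,\p]\subseteq\k$ forces $\m=\p$, $\h=\k$, $\sigma=\theta$. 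An ideal of $\g$ contained in $[\m,\m]$ acts trivially on $\m$ and is therefore zero, so a compact simple ideal — which would lie in $\k=\h=[\m,\m]$ — cannot occur, and $M=G/K$ is a Riemannian symmetric space of the noncompact type; conversely such a space has semisimple transvection group by definition.

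For part~\ref{item:solvablemidpointmap-implies-exists-exponentialsymmetricpair}: if $G$ is an exponential Lie group it is solvable (a semisimple Levi factor would destroy injectivity of $\exp_G$), hence $M$ is solvable; conversely let $\g$ be solvable. The key observation is that $\h=[\m,\m]\subseteq[\g,\g]$ lies in the nilradical $\n$, so $\m$ surjects onto $\g/\n$; since the eigenvalues of $\ad Y$ depend only on $Y\bmod\n$, condition~\ref{thm:A:G-sigma-loc-exp} of Theorem~\ref{thm:A} for $X\in\m$ is equivalent to the same condition for all of $\g$, and the Dixmier--Saito theorem then makes the simply connected group $\widetilde G$ of $\g$ exponential. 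Moreover the kernel of $\widetilde G\to G$, being the largest normal subgroup of $\widetilde G$ inside $\widetilde G^{\widetilde\sigma}$ and hence having Lie algebra the largest ideal of $\g$ in $\h$, namely $0$, is discrete and central, so it lies in $Z(\widetilde G)=\exp\mathfrak z(\g)$; as the latter is connected, the kernel is trivial and $G=\widetilde G$ is exponential.

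For part~\ref{item:midpointmap-semidirect-product} I would take a $\sigma$-invariant Levi decomposition $\g=\l\oplus\r$ (Mostow), with $\r$ the radical. Since $\r$ is an ideal, $[\m,\m]\cap\l=[\m\cap\l,\m\cap\l]$, so the standard-embedding property and the eigenvalue condition descend to $(\l,\sigma|_\l)$, and the semisimple case produces a totally geodesic $M_1$ with Lie triple system $\m_1=\m\cap\l=\p_\l$ of type~\ref{item:semisimplemidpointmap-implies-cartaninvolution}; the totally geodesic $M_2$ with Lie triple system $\m_2=\m\cap\r$ is normal (it equals $R/(R\cap G^\sigma)$ for the $\sigma$-invariant normal radical subgroup $R$), and, being governed by a sub-Lie-triple-system of an exponential one with solvable transvection group, is of type~\ref{item:solvablemidpointmap-implies-exists-exponentialsymmetricpair} by the solvable case. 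It then remains to show that $\Phi\colon\m_1\times\m_2\to M$, $\Phi(X,Y)=\Exp_o\tfrac X2\perp\Exp_o Y=\exp(X)\exp(Y)\cdot o$, is a diffeomorphism: the $R$-orbits in $M$ are the translates $g\cdot M_2$, with leaf space $G/(RG^\sigma)\cong L/K_L\cong M_1$; since $\exp(\p_\l)$ meets each $K_L$-coset of $L$ exactly once (Cartan decomposition) and $\exp(Y)\cdot o=\Exp^{M_2}_o(Y)$ runs bijectively over $M_2$, these identifications make $\Phi$ a bijection, and a tangent-space computation — using that $\Exp_o$ and $\Exp^{M_2}_o$ are immersions and that varying $X$ moves transversally to the leaves — shows $d\Phi$ is everywhere invertible. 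I expect the main obstacle to be precisely this last step: local invertibility of $d\Phi$ is routine, but the global injectivity and surjectivity of $\Phi$ genuinely require the leaf-space picture of the $R$-action together with the Cartan decomposition of $L$; the bookkeeping that carries the eigenvalue condition to the Levi factor and to the nilradical quotient is a secondary and more routine matter.
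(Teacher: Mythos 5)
Your overall route is essentially the paper's: the Cartan-involution argument ($\m\cap\k=0$, hence $\sigma=\theta$) for the semisimple case, the observation that $\h=[\m,\m]$ lies in the nilradical so that the spectral condition transfers from $\m$ to all of $\g$ and Dixmier--Saito applies for the solvable case, and the $\sigma$-invariant Levi decomposition with the fibration of $M$ over $S/S^\sigma$ for part~(3) are exactly the ingredients of the paper's Propositions~\ref{prop:semisimplesympair-pexp} and \ref{prop:solvablesympair-pexp} and Theorem~\ref{thm:sconn-and-loc-exp-implies-exp-diffeo}. Your proof that $G^\sigma$ is connected is the one genuinely different piece, and it is arguably cleaner: the paper argues that $G/G^\sigma$ would otherwise be simultaneously a manifold and an effective finite quotient of $\fR^n$ (Proposition~\ref{prop:connected-subspace-of-diffeotoRn-is-diffeotoRn-and-has-Gsigma-connected}), whereas you compose the covering $G/(G^\sigma)_0\to G/G^\sigma$ with the surjection $\Exp_o$ and note that the composite is the injective map $X\mapsto\exp(2X)$. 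Both work; yours avoids the orbifold digression at the cost of first establishing injectivity of $x\mapsto s_x$, which your geodesic argument does correctly.

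There is, however, one step that fails as written, in part~(2). Having shown that the kernel $\Gamma$ of $\widetilde G\to G$ is discrete, central, and contained in $Z(\widetilde G)=\exp\mathfrak{z}(\g)$, you conclude ``as the latter is connected, the kernel is trivial.'' Connectedness of $Z(\widetilde G)$ does not force a discrete subgroup of it to be trivial --- $\fZ\subset\fR$ is the obvious counterexample --- so this sentence proves nothing. What closes the argument is the $\sigma$-invariance you have already recorded but do not invoke here: $\Gamma\subset\widetilde G^{\widetilde\sigma}$, so an element $\exp Z\in\Gamma$ with $Z\in\mathfrak{z}(\g)$ satisfies $\exp(\sigma Z)=\exp Z$, hence $\sigma Z=Z$ by injectivity of $\exp$ on the exponential group $\widetilde G$, hence $Z\in\mathfrak{z}(\g)\cap\h$; and $\mathfrak{z}(\g)\cap\h=0$ because $\h=[\m,\m]$ acts effectively on $\m$ in the transvection algebra. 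This is precisely how the paper finishes Proposition~\ref{prop:solvablesympair-pexp}, and it is a two-line repair, but your stated justification is invalid. (In part~(3) you leave the invertibility of $d\Phi$ to ``a tangent-space computation''; the paper is equally terse at that point, so I would not count it as a gap.)
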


In the solvable case, we can further characterize exponentiality in terms of \ref{ex:2dim-solvable-non-exponential}.
\begin{thm}
\label{thm:D}
Let $M$ be a solvable, connected and simply connected symmetric space. 
The conditions of Theorem~\ref{thm:A} are further equivalent to
\begin{enumerate}
\item[(6)] no factor space of $M$ has a subspace isomorphic to \ref{ex:2dim-solvable-non-exponential}. \qedhere
\end{enumerate}
\end{thm}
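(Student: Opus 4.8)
Here is the plan, sketched through the level of key steps and the anticipated obstacle.

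\smallskip

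The plan is to move everything to the infinitesimal level. Since the conditions of Theorem~\ref{thm:A} are mutually equivalent, it suffices to show that for a solvable connected simply connected $M$, condition~(6) is equivalent to exponentiality. Being simply connected, $M$ is determined by its Lie triple system $\m$; its symmetric subspaces correspond to sub-Lie triple systems of $\m$ and its factor spaces to quotients $\m/\frb$ by Lie triple system ideals $\frb$, equivalently to $\sigma$-invariant ideals of the standard embedding $\g=\m\oplus[\m,\m]$, which is the Lie algebra of the transvection group and hence solvable. For the implication $\ref{thm:A}\Rightarrow(6)$ I would argue as follows. The text already records that $\ref{ex:2dim-solvable-non-exponential}$ is not exponential, so by Theorem~\ref{thm:A} it fails condition~\ref{thm:A:M-loc-exp}; being simply connected, its Lie triple system therefore admits an operator $Y\mapsto[Y,X,X]$ with a strictly negative eigenvalue. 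The property ``no operator $Y\mapsto[Y,X,X]$ has a strictly negative eigenvalue'' is inherited by sub-Lie triple systems (there $Y\mapsto[Y,X,X]$ is a restriction of the ambient operator to an invariant subspace) and by quotient Lie triple systems (there the induced operator has characteristic polynomial dividing the ambient one). Hence if $M$ is exponential --- so that its Lie triple system has this property by condition~\ref{thm:A:M-loc-exp} --- no factor space of $M$ can contain a subspace isomorphic to $\ref{ex:2dim-solvable-non-exponential}$. This direction needs no topology.

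\smallskip

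For the converse $(6)\Rightarrow\ref{thm:A}$ I would argue by contraposition. If $M$ is not exponential then, $M$ being simply connected, condition~\ref{thm:A:G-sigma-loc-exp} yields an $X\in\m$ for which $\ad X$ has a nonzero purely imaginary eigenvalue. Among the Lie triple system ideals $\frb$ of $\m$ for which $\m/\frb$ is still non-exponential I choose one of maximal dimension and replace $M$ by the corresponding factor space; then $M$ is non-exponential while every proper factor space of $M$ is exponential, and I take $\g=\m\oplus[\m,\m]$ to be the standard embedding of this new $\m$. The key structural point --- and the reason ``factor space'' cannot be weakened to ``subspace'' --- is that one has passed to the standard embedding \emph{of the minimal quotient}: there $[\m,\m]$ acts faithfully on $\m$, so every nonzero $\sigma$-invariant ideal $\a$ of $\g$ meets $\m$ nontrivially (otherwise $\a\subset[\m,\m]$, whence $[\m,\a]\subset\a\cap\m=0$ and $\a$ acts trivially on $\m$, forcing $\a=0$). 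Consequently every nonzero $\sigma$-invariant ideal of $\g$ induces a nonzero Lie triple system ideal of $\m$, so every proper $\sigma$-invariant quotient of $\g$ is exponential in the sense of Theorem~\ref{thm:A}, and $\ad X$ still has a nonzero purely imaginary eigenvalue on $\g$ itself.

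\smallskip

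It then remains to extract $\ref{ex:2dim-solvable-non-exponential}$. I would pick a minimal nonzero $\sigma$-invariant ideal $\mathfrak{j}$ of $\g$; being a minimal ideal of a solvable algebra it is abelian, and being minimal it is irreducible as a $\g$-module. The purely imaginary eigenvalue of $\ad X$ cannot occur on the exponential quotient $\g/\mathfrak{j}$, so it occurs on $\mathfrak{j}$; with Lie's theorem this forces $\dim\mathfrak{j}=2$, with $\g$ acting through a character $\g\to\fC$ and $\ad X$ acting on $\mathfrak{j}$ as a nonzero rotation without dilation (the character value at $X$ being purely imaginary). Since $X\in\m$, the automorphism $\sigma$ anti-commutes with $\ad X$, so $\sigma|_{\mathfrak{j}}$ is a reflection and $\mathfrak{j}\cap\m$ is a line. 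Then $\s:=\fR X\oplus\mathfrak{j}$ is a $\sigma$-invariant subalgebra isomorphic to $\fR\ltimes\fR^2$ with $\fR$ acting by rotations, and $(\s,\sigma|_\s)$ is precisely the standard embedding of the two-dimensional Lie triple system $\fR X\oplus(\mathfrak{j}\cap\m)$; the latter is non-flat and carries an operator $Y\mapsto[Y,X,X]$ with a negative eigenvalue, so by the classification of non-flat solvable two-dimensional symmetric spaces it must be $\ref{ex:2dim-solvable-non-exponential}$ itself (which also settles the topology, the integral subspace being forced to be the simply connected model). This exhibits $\ref{ex:2dim-solvable-non-exponential}$ as a subspace of a factor space of the original $M$, so (6) fails. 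The step I expect to be the main obstacle is setting up this minimal non-exponential object correctly --- at the level of factor spaces, hence via the standard embedding of the quotient Lie triple system --- together with the ``ideal meets $\m$'' lemma; granting that, the extraction from a minimal ideal via Lie's theorem is a short computation.
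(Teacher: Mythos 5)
Your strategy is essentially the paper's: the forward direction is the heredity of local exponentiality under passing to subspaces and factor spaces, and the converse locates the purely imaginary eigenvalue of some $\ad X$ inside an abelian ideal of (the standard embedding of) a factor space, where the vanishing of $[Y,X,Y]$ comes for free from abelianness and $\Span\{X,Y\}$ is then the Lie triple system of \ref{ex:2dim-solvable-non-exponential}. The paper organizes this via Jordan--H\"older series and roots of $\m$-modules; your reduction to a factor space minimal among the non-exponential ones, combined with the (correct) observation that in a standard embedding every nonzero $\sigma$-invariant ideal meets $\m$, is a legitimate substitute for that bookkeeping.

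The gap is exactly at the step you predicted would be ``a short computation.'' A minimal $\sigma$-invariant ideal $\mathfrak{j}$ of $\g$ is \emph{not} in general irreducible as a $\g$-module: it may be of the form $\h'\oplus\sigma\h'$ with $\h'$ a $\g$-irreducible ideal that is not $\sigma$-invariant. Concretely, Lie's theorem gives a weight line $\fC v\subset\mathfrak{j}_\fC$ of weight $\lambda$; since $\lambda$ vanishes on $[\g,\g]\supset[\m,\m]=\h$ one has $\lambda\circ\sigma=-\lambda$, and minimality only yields $\mathfrak{j}_\fC=\Span\{v,\bar v,\sigma v,\sigma\bar v\}$, which is $4$-dimensional whenever the real and imaginary parts of $\lambda|_\m$ are linearly independent --- and in that case there still exist $X\in\m$ with $\lambda(X)$ purely imaginary and nonzero (take $X$ in the kernel of the real part but not of the imaginary part). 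For such $\mathfrak{j}$ the intersection $\mathfrak{j}\cap\m$ is a plane, $\g$ does not act by a single character, and $\fR X\oplus\mathfrak{j}$ is not the standard embedding of a two-dimensional system; this is precisely the dichotomy behind the paper's case analysis (S1)--(S4). The good news is that your argument does not need $\dim\mathfrak{j}=2$: since $\mathfrak{j}$ is abelian, $[Y,X,Y]=[[Y,X],Y]\in[\mathfrak{j},\mathfrak{j}]=0$ for every $Y\in\mathfrak{j}\cap\m$, and the Sylvester computation of Lemma~\ref{lem:explocaldiffeo-SpecadX-ipiZ}'s companion, Lemma~\ref{lem:sympair-loc-exp-iff-M-loc-exp}, applied to the splitting $\mathfrak{j}=(\mathfrak{j}\cap\m)\oplus(\mathfrak{j}\cap\h)$, shows that the negative real number $-\beta^2$ (the square of the purely imaginary eigenvalue of $\ad X|_{\mathfrak{j}}$) is an eigenvalue of the real operator $[\,\cdot\,,X,X]|_{\mathfrak{j}\cap\m}$, which therefore admits a genuine real eigenvector $Y$. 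After rescaling $X$ so that the eigenvalue is $-1$, the span of $X$ and $Y$ is the Lie triple system of \ref{ex:2dim-solvable-non-exponential}; the simple connectedness of the corresponding integral subspace (so that it is the model \ref{ex:2dim-solvable-non-exponential} rather than a quotient of it) should be justified by Proposition~\ref{prop:connected-subspace-of-diffeotoRn-is-diffeotoRn-and-has-Gsigma-connected}, as in the paper, rather than waved through via ``the classification.''
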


We now turn to our last theorem.
In an exponential symmetric space, a triple of points $(x,y,z)$ is called a \emph{triangle}, since any two points are joined by a unique geodesic.
A \emph{double triangle} of $(x,y,z)$ is a triangle $(a,b,c)$ whose midpoints of the edges are $x,y,z$. More precisely, $s_xc=a$, $s_ya=b$ and $s_zb=c$, so that $s_zs_ya=s_xa$.

\begin{thm}
\label{thm:C}
Let $M$ be an exponential symmetric space with transvection group $G$. Then every triangle has a unique double triangle if and only if $M$ is solvable. In that case, for any $g\in G$ and any $z\in M$, there exists a unique $x\in M$ such that $g\cdot x = s_z x$.
\end{thm}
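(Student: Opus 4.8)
The plan is to analyze the double triangle problem by reducing it to the last assertion of the theorem, namely: for fixed $g\in G$ and $z\in M$, solvability of $M$ is equivalent to the unique solvability of $g\cdot x = s_z x$ in $x$. First I would unwind the definition: given a triangle $(x,y,z)$, a double triangle $(a,b,c)$ is determined by $a$ alone (since $b=s_ya$, $c=s_zb$), and the closing condition is $s_xc = a$, i.e. $s_x s_z s_y a = a$. Setting $g = s_x s_z s_y$, which lies in the transvection group $G$ (product of an even number — well, three — of symmetries; more precisely one checks $s_x s_z s_y$ is a symmetry composed with a transvection, but in any case conjugating we can write the fixed-point equation in the form $g' \cdot a = s_w a$ for suitable $g'\in G$, $w\in M$), the existence and uniqueness of a double triangle of every triangle becomes exactly the statement that every such equation $g\cdot x = s_z x$ has a unique solution. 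So the two halves of the theorem are genuinely the same statement, and it suffices to prove the displayed equivalence with $G$ solvable.

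Next I would set up the equation $g\cdot x = s_z x$ concretely. Using the base point $o=z$ (we may move $z$ to the base point by homogeneity), the symmetry $s_z$ acts on $M\cong G/G^\sigma$ as the involution induced by $\sigma$, and via the quadratic-representation picture $\exp\m \subset G_\sigma\subset P$ from the excerpt, together with the identity $s_{\Exp_o(X/2)}s_o = \exp X$, one can transport the problem to $\m$: writing $x = \Exp_o(X/2)\cdot o$ wherever $\Exp_o$ is a diffeomorphism onto its image, the equation becomes an equation for $\exp X\in G_\sigma$ of the form $g\,\sigma(?)\cdots$; more precisely $g\cdot x = s_z x$ translates, after applying $Q$, into $Q(gx) = Q(x)^{-1}$ rearranged as a fixed-point condition $\theta_g(\exp X) = \exp X$ where $\theta_g(p) = g\,p^{-1}\sigma(g)^{-1}\cdots$ — the point is that $\theta_g$ is a composition of left translation, the involution, and $\sigma$, hence an affine-type transformation of $M$. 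Then $x$ with $g\cdot x = s_z x$ is precisely a fixed point of the map $y\mapsto s_z(g^{-1}\cdot y)$, which is again a symmetric-space ``symmetry-like'' transformation: its square is a transvection. I would then invoke exponentiality of $M$ (given) together with Theorem~\ref{thm:B}\ref{item:solvablemidpointmap-implies-exists-exponentialsymmetricpair}, which tells us $M$ solvable $\iff$ $G$ exponential Lie group, to translate the fixed-point problem into the group $G$.

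For the ``if'' direction (solvable $\Rightarrow$ unique double triangle), the idea is: $M$ solvable means $G$ is a (simply connected, by Theorem~\ref{thm:B} combined with simple connectedness of exponential $M$) exponential solvable Lie group, so $\exp_G:\g\to G$ is a diffeomorphism and $\Exp_o:\m\to M$ is a diffeomorphism. The map $x\mapsto s_z(g^{-1}\cdot x)$ is then, after taking $\Exp_o^{-1}$ of everything, an affine map $\m\to\m$ of the form $X\mapsto A X + v$ where $A = \operatorname{Ad}$-type linear part coming from $d\sigma_o$ composed with the linearization of the $G$-action, and $v$ encodes $g$; its fixed point exists and is unique iff $\operatorname{Id}-A$ is invertible. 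The key computation is that the eigenvalues of $A$ are products/exponentials of eigenvalues of $\ad$ on $\m$, and strong exponentiality of $G$ (no purely imaginary eigenvalues of $\ad$) is exactly what forces $1$ not to be an eigenvalue of $A$ — here one uses that for a solvable exponential group the relevant eigenvalues of $A$ are real and positive but never equal to $1$ unless trivially, because $s_z$ contributes a sign flip $X\mapsto -X$ on $\m$ that moves positive eigenvalues away from $1$. I expect this linear-algebra step — pinning down $A$ exactly and showing $\operatorname{Id}-A$ invertible — to be the technical heart. For the ``only if'' direction (unique double triangle $\Rightarrow$ solvable), I would argue contrapositively: if $M$ is not solvable, then by Theorem~\ref{thm:B}\ref{item:midpointmap-semidirect-product} it has a nontrivial semisimple factor $M_1$ which is Riemannian of noncompact type, and in such a space one exhibits a triangle with either no double triangle or more than one — concretely, using that in a Riemannian symmetric space of noncompact type the relevant transformation $x\mapsto s_z(g^{-1}x)$ can be an isometry with a whole fixed submanifold (e.g. taking $g$ in a maximal compact-type subgroup, the composition fixes a totally geodesic subspace), contradicting uniqueness, or by a direct count in $SL(2,\fR)/SO(2)=\Pi_2$.

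The main obstacle I anticipate is the precise identification of the linear part $A$ of the relevant affine map on $\m$ and verifying $\operatorname{Id}-A$ is invertible exactly under strong exponentiality; the reduction of ``double triangle'' to the group-theoretic fixed-point equation, while notation-heavy, should be routine manipulation of the symmetric-space identities $s_x y = g\sigma(g^{-1}l)H$ and $s_{\Exp_o(X/2)}s_o = \exp X$ recorded in the excerpt.
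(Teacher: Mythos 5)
Your reduction of the double-triangle problem to the fixed-point equation $s_xs_zs_y a=a$, and your observation that the two assertions of the theorem are really one statement about products of an odd number of symmetries, are both correct and match the paper. Your plan for the ``only if'' direction is also essentially the paper's: a non-solvable exponential $M$ contains a semisimple factor of noncompact type, hence a copy of the hyperbolic plane, where triangles without double triangles exist; the one point you gloss over is why a bijective $\gamma_3$ on $M$ would have to restrict to a bijection on a connected symmetric subspace $N$ --- the paper needs a separate open-closed argument (Proposition~\ref{prop:connected-symmetric-subspaces-inherit-gamma3-diffeomorphism}) for this, since a priori a double triangle of a triangle in $N$ could leave $N$.

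The genuine gap is in the ``if'' direction. Your plan hinges on the claim that, after conjugating by $\Exp_o^{-1}$, the map $x\mapsto s_z(g^{-1}\cdot x)$ becomes an affine map $X\mapsto AX+v$ of $\m$, so that unique solvability reduces to invertibility of $\Id-A$. This is false for a general solvable exponential symmetric space: already in Example~\ref{ex:2dim-solvable-exponential} the symmetry $s_{(a,b)}(a',b')=(2a-a',2\cosh(a-a')b-b')$ is not affine in $(a',b')$, and exponential coordinates do not linearize the $G$-action. There is no single linear part $A$ whose spectrum you can analyze, so the ``linear-algebra heart'' you anticipate has no object to act on. The paper's actual mechanism is an induction on $\dim G$: peel off a minimal $\sigma$-invariant normal subgroup $N$ (closed, abelian, connected, simply connected), solve the fixed-point equation in $\hat G/\hat G^{\hat\sigma}$ by the inductive hypothesis, and then lift using smooth sections of $G/G^\sigma\to\hat G/\hat G^{\hat\sigma}$ and of $N\to N/N^\sigma$. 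Only on the abelian fiber does the equation become linear, taking the form $(\Id+\Ad_k)(n)=\tilde n$ with $k\in G^\sigma$; invertibility of $\Id+\Ad_k$ follows not from any spectral condition on $\m$ but from the fact that $\k=[\m,\m]$ lies in the nilradical, so $\Ad_k$ is unipotent and $\Id+\Ad_k$ has all eigenvalues equal to $2$. Without this inductive dévissage (or some substitute for it), your argument does not go through.
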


Since any element of the transvection group is a product of an even number of symmetries, the second statement is actually a generalization of the first statement from triangles to $n$-gons, with odd $n$.
This is to be related to the fact that in the affine space $\fR^n$, any vector $v$ (representing a translation, and thus a general transvection) can be ``placed by its midpoint'' anywhere: given a point $z$, the translation of $x=z-\frac{1}{2}v$ by $v$ is the same as the image of $x$ by the symmetry at $z$.
This is not always possible in the hyperbolic plane for example, where if $g$ is too large, there are points at which $g$ cannot be ``placed by its midpoint''.

This idea and the notion of double triangle appear in the study of generating functions for canonical tranformations on symplectic symmetric spaces \cite{weinstein_traces_1994,rios_variational_2004,severa_symplectic_2006}.

The paper is organized as follows.
In Section~\ref{sec:midpoints-and-exponential-spaces}, we prove Theorems~\ref{thm:A}, \ref{thm:B} and \ref{thm:D}, and in Section~\ref{sec:double-triangles-and-solvable-spaces}, we prove Theorem~\ref{thm:C}.

\section{Midpoints and exponential spaces}
\label{sec:midpoints-and-exponential-spaces}

In this section, $M$ will always denote a connected symmetric space, $o\in M$ a fixed base point, and $\m$ the Lie triple system of $M$ at $o$.
For a symmetric triple $(G,H,\sigma)$ realizing $M$, we will denote $\g$ the Lie algebra of $G$, and $\g=\h\oplus\m$ its decomposition into $(\pm 1)$-eigenspaces for $\sigma$, identifying the $(-1)$-eigenspace with the Lie triple system.
A symmetric pair $(G,\sigma)$ (respectively, a symmetric triple $(G,H,\sigma)$) realizing $M$ will be called a \emph{transvection pair} (respectively, a \emph{transvection triple}) if $G$ is the transvection group of $M$, and $\sigma$ is the conjugation by $s_o$. 

\begin{defn}
A point $z\in M$ is said to be a \emph{midpoint} for $x,y\in M$ if $s_zx=y$,
and is said to be a \emph{square root} of $x$ if $z$ is a midpoint for $x$ and $o$.
\end{defn}

In order to motivate the definition of local exponentiality, let us recall
Helgason's formula and the well-known fact that it implies.

\begin{prop}[Helgason's formula {\cite[Theorem 4.1]{helgason_differential_1978}}]
\label{prop:helgason-diff-exp}
Let $(G,H,\sigma)$ be a symmetric triple, and for $g\in G$ denote $\tau(g)$ the map $hH\mapsto ghH$ from $G/H$ onto itself. The differential of the exponential map $\Exp_o$ from $\m$ to $G/H$ is given by
\begin{equation*}
d({\Exp_o})_X = d({\tau(\exp X)})_{eH}\circ \frac{\sinh{\ad X}}{\ad X} . 
\end{equation*} 
\end{prop}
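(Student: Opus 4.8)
The plan is to deduce the formula from the classical expression for the differential of the exponential map of the Lie group $G$, and then to project down to $G/H$, exploiting that $\ad X$ interchanges $\m$ and $\h$ when $X\in\m$. First I would recall that the geodesics of $G/H$ through $o=eH$ are precisely the curves $t\mapsto\exp(tX)\cdot o$ with $X\in\m$; equivalently, using $s_{\Exp_o(X/2)}s_o=\exp X$ as recalled above, $\Exp_o(X)=\exp(X)\cdot o$. Thus $\Exp_o=\pi\circ\exp|_\m$, where $\pi\colon G\to G/H$ is the projection and $d\pi_e$ restricts to the canonical identification $\m\cong T_oM$ that is implicit in the statement.

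Next, fix $X,Y\in\m$ and set $c(t)=\exp(X+tY)$, so that $\Exp_o(X+tY)=\pi(c(t))$ and $d(\Exp_o)_X(Y)=d\pi_{\exp X}(c'(0))$. By the classical formula for the differential of $\exp\colon\g\to G$,
\[
c'(0)=d(L_{\exp X})_e(W),\qquad W=\frac{1-e^{-\ad X}}{\ad X}\,Y=\sum_{k\ge 0}\frac{(-1)^k}{(k+1)!}\,(\ad X)^k Y,
\]
where $L_g$ denotes left translation in $G$. I would then decompose $W=W_\m+W_\h$ along $\g=\m\oplus\h$. Since $X\in\m$ one has $\ad X(\m)\subset\h$ and $\ad X(\h)\subset\m$, so the even powers of $\ad X$ give the $\m$-part and the odd powers the $\h$-part:
\[
W_\m=\sum_{k\ge 0}\frac{1}{(2k+1)!}\,(\ad X)^{2k}Y=\frac{\sinh\ad X}{\ad X}\,Y,
\]
while $W_\h$ collects the odd terms.

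To finish, I would observe that $W_\h\in\h=\Lie(H)$, so $t\mapsto(\exp X)\cdot\exp(tW_\h)$ stays in the fiber $(\exp X)H$ and hence $d\pi_{\exp X}\big(d(L_{\exp X})_e(W_\h)\big)=0$. For the remaining term, $\pi\big((\exp X)\exp(tW_\m)\big)=\tau(\exp X)\big(\pi(\exp(tW_\m))\big)$, and differentiating at $t=0$ gives $d\pi_{\exp X}\big(d(L_{\exp X})_e(W_\m)\big)=d(\tau(\exp X))_{eH}\big(d\pi_e(W_\m)\big)$; under the identification $\m\cong T_oM$ this equals $d(\tau(\exp X))_{eH}\big(\tfrac{\sinh\ad X}{\ad X}Y\big)$, which is the asserted formula.

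The main thing to be careful about is the bookkeeping in the last two steps: whether one writes the Lie-group formula with left or with right translation, and correspondingly with $\tfrac{1-e^{-\ad X}}{\ad X}$ or $\tfrac{e^{\ad X}-1}{\ad X}$. These two power series have the same even part $\tfrac{\sinh\ad X}{\ad X}$ and differ only in their odd part, which lands in $\h$ and is killed by $d\pi$ — which is precisely why the final formula is so clean. One also uses that $\Lie(H)=\h$, valid because $H$ is open in $G^\sigma$ and $\Lie(G^\sigma)$ is the $(+1)$-eigenspace of $d\sigma_e$. (Alternatively, the formula follows from solving the Jacobi equation along $t\mapsto\Exp_o(tX)$ using that a symmetric space has parallel curvature, but the argument above is shorter.)
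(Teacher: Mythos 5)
Your argument is correct. The paper itself gives no proof of this proposition — it is quoted directly from Helgason \cite[Ch.~IV, Theorem 4.1]{helgason_differential_1978} — and your derivation (write $\Exp_o=\pi\circ\exp|_\m$, apply the classical formula $d\exp_X=d(L_{\exp X})_e\circ\frac{1-e^{-\ad X}}{\ad X}$, and use that for $X\in\m$ the even powers of $\ad X$ preserve $\m$ while the odd powers land in $\h=\Lie(H)$ and are killed by $d\pi$) is exactly the standard proof found in that reference, with the bookkeeping of the $\m$/$\h$ decomposition handled correctly.
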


\begin{lem}
\label{lem:explocaldiffeo-SpecadX-ipiZ}
Let $(G,H,\sigma)$ be a symmetric triple realizing $M$. 
Then ${\Exp_o}$ is a local diffeomorphism at $X\in\m$ if and only if the set of eigenvalues of $\ad X$ only meets $i\pi\fZ$ at $0$.
\end{lem}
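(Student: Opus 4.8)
The plan is to read the differential of $\Exp_o$ off Helgason's formula (Proposition~\ref{prop:helgason-diff-exp}) and detect its invertibility through the spectrum of $\ad X$. Introduce the entire even function $f(z)=\frac{\sinh z}{z}=\sum_{n\ge 0}\frac{z^{2n}}{(2n+1)!}$, whose zero set is exactly $i\pi\fZ\setminus\{0\}$. Since $X\in\m$, the identity $\sigma[X,Y]=[-X,\sigma Y]$ shows that $\ad X$ anticommutes with $\sigma$ and hence interchanges $\m$ and $\h$; consequently the even series $f(\ad X)=\frac{\sinh\ad X}{\ad X}$ preserves each of $\m$ and $\h$. Because $\tau(\exp X)$ is a diffeomorphism of $M$, its differential at $eH$ is a linear isomorphism, so Helgason's formula shows that $d(\Exp_o)_X$ is invertible --- equivalently, since $\dim\m=\dim M$, that $\Exp_o$ is a local diffeomorphism at $X$ --- if and only if $f(\ad X)|_\m$ is invertible.

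Next I would transfer this from $\m$ to all of $\g$. If $\mathrm{Spec}(\ad X)\cap i\pi\fZ\subseteq\{0\}$, then $f$ has no zero among the eigenvalues of $\ad X$, so $f(\ad X)$ is invertible on $\g$ --- for an entire function $h$ and an operator $A$ on a finite-dimensional space, $h(A)$ is invertible precisely when $h$ does not vanish on the spectrum of $A$ --- hence invertible on the invariant subspace $\m$. Conversely, suppose $i\pi k\in\mathrm{Spec}(\ad X)$ for some $k\in\fZ\setminus\{0\}$. As $\sigma$ conjugates $\ad X$ to $-\ad X$, the spectrum of $\ad X$ is stable under $\mu\mapsto-\mu$, so $-i\pi k$ is also an eigenvalue; the sum $V$ of the generalized eigenspaces of $\ad X$ for $\pm i\pi k$ is then $\sigma$-stable with $\sigma$ interchanging the two summands, so $\m\cap V$ has dimension $\tfrac{1}{2}\dim V>0$. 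On $V$ every eigenvalue of $f(\ad X)$ equals $f(\pm i\pi k)=0$, so $f(\ad X)|_V$ is nilpotent; since $f(\ad X)$ preserves the nonzero subspace $\m\cap V$, its restriction there is nilpotent too and hence singular, whence $f(\ad X)|_\m$ is not invertible. Combining the two directions gives the equivalence.

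The only step needing genuine care is this passage between $\m$ and $\g$: the operator furnished by Helgason's formula lives on $\m$, whereas the clean eigenvalue criterion concerns $\ad X$ on $\g$, an operator that does not even preserve $\m$. What bridges the gap is the $\sigma$-symmetry of $\mathrm{Spec}(\ad X)$ together with the fact that the relevant zeros of $\sinh z/z$ are all nonzero. The remaining ingredients --- convergence of the defining power series, the inverse function theorem, and the functional-calculus statement about $h(A)$ --- are routine.
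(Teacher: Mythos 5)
Your proof is correct and follows the same route as the paper: Helgason's formula reduces the question to the invertibility of $\frac{\sinh(\ad X)}{\ad X}$, which is then read off the zero set of $\sinh z/z$. The only difference is that you carefully justify the passage between the operator restricted to $\m$ and the spectrum of $\ad X$ on all of $\g$ (via the $\sigma$-symmetry of the spectrum and the $\sigma$-stable sum of generalized eigenspaces for $\pm i\pi k$); the paper's one-line proof glosses over this point, handling the analogous spectral comparison only in the next lemma via Sylvester's determinant identity, so your extra care is a genuine improvement in rigour rather than a change of method.
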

\begin{proof}
By Proposition~\ref{prop:helgason-diff-exp}, $\Exp_o$ is a local diffeomorphism at $X\in\m$ if and only if $\det\left(\frac{\sinh(\ad X)}{\ad X} \right) \neq 0$, a condition which is equivalent to $\frac{\sinh \lambda}{\lambda} \neq 0$ for every eigenvalue $\lambda$ of $\ad X$, which in turn amounts to the statement.
\end{proof}

\begin{lem}
\label{lem:sympair-loc-exp-iff-M-loc-exp}
Let $(G,H,\sigma)$ be a symmetric triple realizing $M$, and let $X\in\m$. 
Then a complex number $\lambda$ is an eigenvalue of $\ad X$ if and only if its square is an eigenvalue of $Y\in \m\mapsto [Y,X,X]$.
\end{lem}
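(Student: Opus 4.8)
The plan is to reduce everything to a statement about the single operator $\ad X$ on $\g$ and its square, exploiting the eigenspace decomposition $\g=\h\oplus\m$ of $\sigma$. First I would record the purely formal identity that, for $X\in\m$, the bracket convention $[A,B,C]=[[A,B],C]$ gives $[Y,X,X]=[[Y,X],X]=(\ad X)^2 Y$; thus the operator $Y\mapsto[Y,X,X]$ on $\m$ is nothing but the restriction of $(\ad X)^2$ to $\m$. This restriction is legitimate because $\ad X$ interchanges $\h$ and $\m$ (as $[\m,\m]\subseteq\h$ and $[\h,\m]\subseteq\m$), so $(\ad X)^2$ preserves each summand. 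Being a purely linear-algebraic statement, the lemma may then be proved after complexifying: write $\g_\fC=\h_\fC\oplus\m_\fC$ and read both ``eigenvalue'' conditions as referring to the $\fC$-linear extensions, with $\ad X$ still interchanging $\h_\fC$ and $\m_\fC$.

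For the forward direction, I would take an eigenvector $v=v_\h+v_\m$ of $\ad X$ for the eigenvalue $\lambda$ and decompose it along $\h_\fC\oplus\m_\fC$. Since $\ad X$ swaps the two summands, matching components in $\ad X\cdot v=\lambda v$ gives $\ad X\cdot v_\m=\lambda v_\h$ and $\ad X\cdot v_\h=\lambda v_\m$, hence $(\ad X)^2 v_\m=\lambda^2 v_\m$; for $\lambda\neq 0$ one checks $v_\m\neq 0$, so $\lambda^2$ is an eigenvalue of $Y\mapsto[Y,X,X]$. The degenerate case $\lambda=0$ I would treat on the side: when $X\neq 0$, the relation $\ad X\cdot X=0$ with $X\in\m$ already exhibits $0$ as an eigenvalue of the $\m$-operator, and when $X=0$ both operators vanish.

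The converse is where the real content lies. Starting from an eigenvector $w\in\m_\fC$ of $(\ad X)^2$ for $\lambda^2$ with $\lambda\neq 0$, I would set $u=\ad X\cdot w\in\h_\fC$, note $u\neq 0$ (else $\lambda^2 w=(\ad X)^2 w=0$), observe that $w$ and $u$ are linearly independent since they lie in complementary summands, and that $V=\fC w+\fC u$ is $\ad X$-invariant because $\ad X\cdot u=(\ad X)^2 w=\lambda^2 w$. In the basis $(w,u)$ the operator $\ad X|_V$ has matrix $\left(\begin{smallmatrix}0&\lambda^2\\1&0\end{smallmatrix}\right)$, whose eigenvalues are $\lambda$ and $-\lambda$; in particular $\lambda$ itself is an eigenvalue of $\ad X$.

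The one subtle point is exactly this last step: the square-root relation between the two spectra a priori only produces some square root $\pm\lambda$ of $\lambda^2$, and one must rule out landing on the wrong sign. The two-dimensional invariant subspace above handles this cleanly by producing both signs at once; alternatively one could invoke the symmetry of the spectrum of $\ad X$ under $\lambda\mapsto-\lambda$ — immediate from $\sigma\circ\ad X\circ\sigma=-\ad X$ for $X\in\m$ — or the classical coincidence of the nonzero spectra of $AB$ and $BA$ applied to the two maps $\m_\fC\to\h_\fC$ and $\h_\fC\to\m_\fC$ induced by $\ad X$. Any of these finishes the argument.
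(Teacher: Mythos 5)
Your proof is correct, and it takes a genuinely different route from the paper's. After the same initial reduction $[Y,X,X]=(\ad X)^2|_\m$, the paper writes $\ad X$ in block form $\bigl(\begin{smallmatrix}0&A\\B&0\end{smallmatrix}\bigr)$ relative to $\g=\m\oplus\h$, so that $(\ad X)^2=\bigl(\begin{smallmatrix}AB&0\\0&BA\end{smallmatrix}\bigr)$, and invokes Sylvester's identity $\det(AB-\lambda I_m)=(-\lambda)^{m-h}\det(BA-\lambda I_h)$ to conclude that the spectra of $(\ad X)^2|_\m$ and $(\ad X)^2|_\h$ coincide up to zeros; combined with the spectral mapping theorem for $(\ad X)^2$ this yields the lemma. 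Your argument instead works at the level of eigenvectors: splitting an eigenvector of $\ad X$ along $\h_\fC\oplus\m_\fC$ for one direction, and building the two-dimensional $\ad X$-invariant subspace $\fC w+\fC(\ad X\cdot w)$ with matrix $\bigl(\begin{smallmatrix}0&\lambda^2\\1&0\end{smallmatrix}\bigr)$ for the other. This is more elementary (no characteristic polynomials of products), and it has the merit of explicitly resolving the $\pm\lambda$ ambiguity that the paper's closing phrase ``which yields the conclusion'' leaves implicit: passing from $\lambda^2\in\mathrm{spec}\bigl((\ad X)^2\bigr)$ back to $\lambda\in\mathrm{spec}(\ad X)$ does require the symmetry of the spectrum of $\ad X$ under $\lambda\mapsto-\lambda$ (equivalently, your $2\times 2$ block, or the relation $\sigma\circ\ad X\circ\sigma=-\ad X$), and your write-up correctly identifies this as the crux. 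Both proofs ultimately rest on the same structural fact that $\ad X$ interchanges $\h$ and $\m$, and your handling of the degenerate case $\lambda=0$ is adequate.
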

\begin{proof}
Seeing $\m$ as a subset of $\g$, we have $[\,\cdot\,,X,X]=[[\,\cdot\,,X],X]=(\ad X)^2|_\m$.
Set $m=\dim\m$ and $h=\dim\h$.
Choosing a basis of $\g=\m\oplus\h$, we may write $\ad X=\big(\begin{smallmatrix}0&A\\B&0\end{smallmatrix}\big)$ where $A$ and $B$ are $m\times h$ and $h\times m$ matrices respectively, and we have $(\ad X)^2=\big(\begin{smallmatrix}AB&0\\0&BA\end{smallmatrix}\big)$. By Sylvester's determinant formula, we get
$
\det(AB-\lambda I_m) = (-\lambda)^{m-h} \det(BA-\lambda I_k) 
$,
thus the eigenvalues of $AB=(\ad X)^2|_\m$ and $BA=(\ad X)^2|_\h$ are identical, up to $|m-h|$ zeros, which yields the conclusion.
\end{proof}

\begin{defn}
The symmetric space $M$ is said to be \emph{locally exponential} if no operator $\m\to\m:Y\mapsto[Y,X,X]$, $X\in \m$, has strictly negative eigenvalues, and \emph{exponential} if the exponential map $\Exp_o:T_oM\to M$ at $o$ is a global diffeomorphism.
\end{defn}

By translation, in an exponential symmetric space, every exponential map $\Exp_x$, $x\in M$, is a diffeomorphism.
Also, it is clear that a symmetric space is exponential if and only if any symmetric triple $(G,H,\sigma)$ realizing it is \emph{exponential} in the sense that the map $\exp:\m\to G/H$ is a global diffeomorphism.

\begin{lem}
\label{lem:at-most-one-midpoint-implies-exp-inj}
If every two points have at most one midpoint, then the exponential map $\Exp_o$ is injective.
\end{lem}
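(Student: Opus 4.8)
The plan is to argue by contraposition: assuming $\Exp_o$ is not injective, I will produce two points with two distinct midpoints. Suppose $\Exp_o X = \Exp_o Y$ with $X \neq Y$ in $\m$. The key idea is that the point $p = \Exp_o X = \Exp_o Y$ is reached from $o$ by geodesics with distinct initial velocities, and on a symmetric space the midpoint of $o$ and $\Exp_o(2Z)$ along the geodesic $t \mapsto \Exp_o(tZ)$ is exactly $\Exp_o Z$ — this is because the symmetry $s_{\Exp_o Z}$ fixes that geodesic setwise and swaps the two parameter values $0$ and $2$ (equivalently, using the identity $s_{\Exp_o \frac{W}{2}} s_o = \exp W$ recorded in the introduction, one checks $s_{\Exp_o Z}(o) = \Exp_o(2Z)$). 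So first I would rescale: from $\Exp_o X = \Exp_o Y$ one gets $\Exp_o(\tfrac{X}{2})$ and $\Exp_o(\tfrac{Y}{2})$ are two midpoints of the pair $(o, p)$, and it remains only to check they are distinct.

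The distinctness is where a little care is needed, since $\Exp_o$ need not be injective a priori, so $X \neq Y$ does not immediately give $\Exp_o(\tfrac X2) \neq \Exp_o(\tfrac Y2)$. Here I would invoke the square-root description: a midpoint $z$ of $(o,p)$ is the same as a point with $s_z o = p$, i.e. $\exp(2W) \cdot o = p$ when $z = \Exp_o W$. The cleaner route is to observe that any midpoint $z$ of $o$ and $p$ satisfies $s_z^2 = \mathrm{id}$ with $s_z(o) = p$, hence $z$ is a fixed point of $s_o \circ s_z$; more usefully, the relation $s_z s_o = \exp W$ with $W = 2\,\Exp_o^{-1}_{\text{local}}(z)$ shows that distinct midpoints correspond to distinct elements $\exp W \in G$ mapping $o$ to... — this is getting circular. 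Instead I would simply take $X, Y$ to be a \emph{minimal} counterexample to injectivity, or better: pick any $X \neq Y$ with $\Exp_o X = \Exp_o Y$ and set $a = \Exp_o(\tfrac{X}{2})$, $b = \Exp_o(\tfrac{Y}{2})$; if $a = b$ then $s_a = s_b$ and I can replace the pair $(o, \Exp_o X)$ by the pair $(a, \Exp_o X)$ to shrink the "distance" and iterate, or argue directly that $a=b$ forces $X = Y$ via Proposition~\ref{prop:helgason-diff-exp} along the segment.

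The honest main obstacle is exactly this: \textbf{upgrading $X \neq Y$ to $a \neq b$}. The clean fix, which I expect the author uses, is to choose the counterexample with $\|X - Y\|$ (or the pair $X,Y$) extremal in a suitable sense, or to work on a \emph{minimal geodesic segment}: take $X, Y$ with $\Exp_o X = \Exp_o Y$, $X \ne Y$, and among all such pairs minimize $\max(\|X\|,\|Y\|)$; then the halved vectors $\tfrac X2, \tfrac Y2$ are too small to already be a non-injectivity pair, so $\Exp_o \tfrac X2 \neq \Exp_o \tfrac Y2$, giving the two distinct midpoints $\Exp_o\tfrac X2$ and $\Exp_o \tfrac Y2$ of the points $o$ and $\Exp_o X$. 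This contradicts the hypothesis that any two points have at most one midpoint, completing the contrapositive. The routine parts — that $s_{\Exp_o Z}$ sends $o$ to $\Exp_o(2Z)$, and that rescaling along a fixed geodesic behaves as claimed — follow directly from the symmetric-space identities quoted in the introduction and I would dispatch them in a sentence each.
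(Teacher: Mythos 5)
Your contrapositive strategy can be pushed through, but it is genuinely harder than necessary, and the step you correctly single out as ``the honest main obstacle'' (upgrading $X\neq Y$ to $\Exp_o\tfrac X2\neq\Exp_o\tfrac Y2$) simply never arises if you argue in the direct direction, which is what the paper does. Assume every two points have at most one midpoint and suppose $\Exp_oX=\Exp_oY$. Then $\Exp_o\tfrac X2$ and $\Exp_o\tfrac Y2$ are both midpoints of $o$ and $\Exp_oX$ (both are ``square roots''), hence they are \emph{equal} by hypothesis. Iterating, $\Exp_o\tfrac{X}{2^n}=\Exp_o\tfrac{Y}{2^n}$ for all $n$, and since $\Exp_o$ is injective near $0$ (Helgason's formula), this forces $X=Y$. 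The hypothesis is used to produce coincidences, not to be contradicted, so no distinctness ever has to be established.

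As for the fix you propose: taking a pair that \emph{minimizes} $\max(\|X\|,\|Y\|)$ over all non-injectivity pairs has an attainment problem. The infimum $c$ is positive (by local injectivity at $0$, at least one of $X,Y$ must leave a fixed ball), but a minimizing sequence may degenerate to a limit with $X_\infty=Y_\infty$ at a critical point of $\Exp_o$, so a genuine minimizer with $X\neq Y$ need not exist. The repair is easy and keeps your idea intact: choose a non-injectivity pair with $\max(\|X\|,\|Y\|)<2c$. Then $\max(\|\tfrac X2\|,\|\tfrac Y2\|)<c$ lies strictly below the infimum, so $(\tfrac X2,\tfrac Y2)$ cannot be a non-injectivity pair; since $\tfrac X2\neq\tfrac Y2$, you get $\Exp_o\tfrac X2\neq\Exp_o\tfrac Y2$, i.e.\ two distinct midpoints of $o$ and $\Exp_oX$, contradicting the hypothesis. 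With that tweak your argument is correct, but the exploratory dead ends in the middle (``this is getting circular'', the undeveloped alternative via Proposition~\ref{prop:helgason-diff-exp}) should be removed, and the direct argument above is strictly shorter.
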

\begin{proof}
Assume $X,Y\in T_oM$ are such that $\Exp_oX=\Exp_oY$. 
Then, $\Exp_o\frac{X}{2}$ and $\Exp_o\frac{Y}{2}$ are both square roots of $\Exp_oX$, so that they coincide by assumption.
By induction, we obtain $\Exp_o\frac{X}{n}=\Exp_o\frac{Y}{n}$ for all positive integers $n$.
Since $\Exp_o$ is injective around $0$, we have $X=Y$.
\end{proof}

Recall that the automorphisms of a symmetric space are the affine automorphisms of its canonical affine structure. 
In particular, the exponential map at some point $o$ is equivariant under the actions of the 
subgroup of automorphisms fixing $o$.
\begin{lem}
\label{lem:exp-injective-implies-immersion-and-diffeotoRn}
If the exponential map $\Exp_o$ is injective, then it is an immersion, and $M$ is diffeomorphic to a Euclidean space. 
In particular, $M$ is simply connected and locally exponential.
\end{lem}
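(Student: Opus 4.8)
The plan is to run everything off the explicit description of the critical points of $\Exp_o$ provided by Helgason's formula (Proposition~\ref{prop:helgason-diff-exp}) together with Lemmas~\ref{lem:explocaldiffeo-SpecadX-ipiZ} and~\ref{lem:sympair-loc-exp-iff-M-loc-exp}: $\Exp_o$ fails to be a local diffeomorphism at $X\in\m$ exactly when $\ad X$ has an eigenvalue in $i\pi\fZ\setminus\{0\}$, equivalently when the operator $T_X\colon Y\mapsto[Y,X,X]$ on $\m$ has an eigenvalue in $\{-(k\pi)^2 : k\geq 1\}$. First I would show that injectivity of $\Exp_o$ forces it to be an immersion. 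Suppose not, and pick $X_0\in\m$ with $d(\Exp_o)_{X_0}$ singular; after replacing $X_0$ by $X_0/k$ we may assume $-\pi^2$ is an eigenvalue of $T_{X_0}$, with a real eigenvector $W$, so that $[W,X_0,X_0]=-\pi^2W$. By Helgason's formula the field $J(t):=d(\Exp_o)_{tX_0}(tW)$ along $\gamma(t)=\Exp_o(tX_0)$ equals $\tfrac{\sin\pi t}{\pi}$ times the $\gamma$-parallel transport of $W$; it is a nonzero Jacobi field with $J(0)=J(1)=0$ — that is, $o$ and $\Exp_o(X_0)$ are conjugate along $\gamma$ — and it is the variation field of the geodesic variation $(s,t)\mapsto\Exp_o\!\big(t(X_0+sW)\big)$. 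The one substantial point is to turn this conjugate point into a genuine failure of injectivity: since the curvature of a symmetric space is parallel, $\Exp_o$ must \emph{fold} across its critical locus near $X_0$. I would establish this either by an orientation/degree argument — the Jacobian $\det d(\Exp_o)_{tX_0}$, read off from Helgason's formula as $\prod_\lambda\frac{\sinh\lambda}{\lambda}$ over the eigenvalues $\lambda$ of $\ad(tX_0)$, vanishes at $t=1$ and changes sign there, making $\Exp_o$ locally two-to-one — or, equivalently, by restricting to a totally geodesic surface through $o$ carrying $X_0$, obtained from a $\sigma$-stable subalgebra of $\g$ generated by $X_0$ and a rotation block of $\ad X_0$, on which $\Exp_o$ reduces to (a cover of) the exponential map of the model space~\ref{ex:2dim-solvable-non-exponential} and is therefore not injective. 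Either way this contradicts the hypothesis, so $\Exp_o$ has no critical point.

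Granting that $\Exp_o$ is an immersion, the remaining assertions are formal. As an immersion between manifolds of the same dimension, $\Exp_o$ is a local diffeomorphism at every point; applying this to each scalar multiple $cX$ ($X\in\m$, $c\in\fR$) and using the description above, $\ad(cX)$ has no nonzero eigenvalue in $i\pi\fZ$ for all $c$, hence $\ad X$ has no nonzero purely imaginary eigenvalue at all, and by Lemma~\ref{lem:sympair-loc-exp-iff-M-loc-exp} no operator $Y\mapsto[Y,X,X]$ has a strictly negative eigenvalue; i.e.\ $M$ is locally exponential. Finally, an injective local diffeomorphism is a diffeomorphism onto its open image, and since $M$ is a geodesically complete affine manifold on which $\Exp_o$ is nowhere singular, $\Exp_o$ is in fact a covering map (the affine analogue of the Cartan--Hadamard theorem); being injective, it is then a global diffeomorphism $\m\to M$. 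Thus $M$ is diffeomorphic to the Euclidean space $\m$, and in particular simply connected.

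The main obstacle is the first step: converting "$\Exp_o$ has a conjugate point" into "$\Exp_o$ is not injective." A critical point of a general smooth map need not obstruct injectivity, so one genuinely has to use the rigidity of the exponential map of a symmetric space — equivalently its explicit shape via Helgason's formula — to see that its singularities are folds. Controlling this carefully near a first conjugate point, and in particular handling conjugate points of higher multiplicity (where the naive Jacobian sign need not change), is the delicate part; the reduction to the two-dimensional model~\ref{ex:2dim-solvable-non-exponential} is the cleanest way I see to sidestep these subtleties.
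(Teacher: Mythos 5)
Your setup via Helgason's formula and the rescaling that makes $-\pi^2$ an eigenvalue of $[\,\cdot\,,X_0,X_0]$ is fine, but the central step --- converting the conjugate point at $\Exp_o(X_0)$ into a failure of injectivity --- is precisely what you leave open, and neither of your two routes closes it. The degree argument breaks down, as you yourself note, when $-\pi^2$ occurs with even multiplicity (the Jacobian $\prod_j f(t^2\mu_j)$ with $f(\mu)=\sinh\sqrt{\mu}/\sqrt{\mu}$ then stays nonnegative near $t=1$), and you give no way to recover a sign change elsewhere in $\m$. The reduction to a totally geodesic surface modelled on \ref{ex:2dim-solvable-non-exponential} is not available either: the Lie triple system generated by $X_0$ and the eigenvector $W$ need not be two-dimensional, since $[X_0,W,W]$ has no reason to lie in $\Span\{X_0,W\}$, let alone to vanish --- for the sphere $S^2$, for instance, this subsystem is the whole of $\m$ and is not that of \ref{ex:2dim-solvable-non-exponential}. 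The paper avoids conjugate-point analysis entirely: if $d(\Exp_o)_XY=0$ with $Y\neq 0$, then $\frac{\sinh(\ad X)}{\ad X}Y=0$, hence $\sinh(\ad X)[X,Y]=\ad X\,\sinh(\ad X)\,Y=0$, which shows that $e^{t[X,Y]}$ fixes the point $\Exp_oX$; on the other hand the curve $t\mapsto e^{t\ad[X,Y]}\cdot X$ in $\m$ is non-constant (if its velocity $[[X,Y],X]=-(\ad X)^2Y$ vanished, we would get $\frac{\sinh(\ad X)}{\ad X}Y=Y\neq 0$, contradicting the assumption), so equivariance of $\Exp_o$ under the isotropy group produces a whole curve mapping to the single point $\Exp_oX$.

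The second gap is surjectivity. An injective immersion between equidimensional manifolds is only a diffeomorphism onto an \emph{open} subset; you obtain ontoness by invoking an ``affine analogue of the Cartan--Hadamard theorem.'' That analogue is not a citable fact for general (non-metric) connections --- the Riemannian proof rests on completeness of the pulled-back metric on $T_oM$, for which there is no affine substitute --- so this step needs a genuine argument. The paper instead quotes the Koh--Loos fibration theorem: $M$ fibers over a compact Riemannian symmetric subspace with fibers diffeomorphic to a Euclidean space, and injectivity of the exponential maps forces the compact base to be a point, whence $M\cong\fR^n$. Your final derivation of local exponentiality (applying Lemma~\ref{lem:explocaldiffeo-SpecadX-ipiZ} to all multiples $cX$) does agree with the paper.
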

\begin{proof}
Let $(G,H,\sigma)$ be a triple realizing $M$.
Assume that $\Exp_o$ is not an immersion at $X\in\m$, so that $\ad X$ has an eigenvalue $k\pi i$ for some $k\in\fZ_*$.
We will show that there is a curve through $X$ in $\m$ on which $\Exp_o$ is constant, so that $\Exp_o$ is not injective.
Let $0\neq Y\in\g_\fC$ be such that $[X,Y]=k\pi iY$.
Spelling out the eigenvalue equation, one sees that $Y=(A_1+iB_1)+i(A_2+iB_2)$ with $A_j\in\m$, $B_j\in\h$, $[X,A_j]=k\pi B_j$, and $[X,B_j]=-k\pi A_j$, for $j=1,2$.
We can assume that $A_1\neq 0$ and define $X(t)=e^{t\ad B_1}X$ for all $t\in\fR$.
Then, on the one hand, $[B_1,X]=k\pi A_1\neq 0$ so $X(t)$ is not constant, and on the other hand
\begin{align*}
\frac{\sinh(\ad X(t))}{\ad X(t)} \dot X(t)
&= e^{t\ad B_1} \frac{\sinh(\ad X)}{\ad X} e^{-t\ad B_1} \dot X(t) \\
&= e^{t\ad B_1} \frac{\sinh(\ad X)}{\ad X} [B_1,X] \\
&= e^{t\ad B_1} \sin(k\pi) A_1 = 0 ,
\end{align*}
so that $\frac{d}{dt}\Exp_o(X(t))=0$ for all $t\in\fR$ by Helgason's formula.
Hence $t\mapsto X(t)$ is the curve we were looking for.

For the second assertion, in \cite[Theorem 1]{koh_affine_1965} and \cite[Chapter IV, Theorem 3.5]{loos_symmetric_1969}, it is shown that any connected symmetric space $G/H$ is smoothly fibered over a (connected) compact Riemannian symmetric subspace $N=K/(K\cap H)$ with fibers diffeomorphic to a Euclidean space. 
But no non-discrete compact symmetric space has injective exponential maps, so $N$ must be reduced to a point.

The last assertion follows now from the first two assertions and from Lemma~\ref{lem:explocaldiffeo-SpecadX-ipiZ}.
\end{proof}

\begin{lem}
\label{lem:exp-diffeo-implies-unique-midpoint}
Let $M$ be exponential.
Then every two points have a unique midpoint.
\end{lem}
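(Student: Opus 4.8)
The plan is to reduce to the fixed base point $o$ and then read the midpoint off directly from the exponential map, using the identity $s_{\Exp_o \frac{X}{2}}s_o=\exp X$ recalled in the introduction.

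First I would use that $G$ acts transitively on $M$ by automorphisms to reduce to the case in which one of the two points is $o$. If $g\in G$ satisfies $g\cdot o=x$, then, since automorphisms conjugate symmetries by $g^{-1}s_zg=s_{g^{-1}z}$, a point $z$ is a midpoint of $x$ and $y$ if and only if $g^{-1}z$ is a midpoint of $o$ and $g^{-1}y$. So it suffices to treat midpoints of $o$ and an arbitrary $y\in M$. Next, since each $s_z$ is an involution, $z$ is a midpoint of $o$ and $y$ (that is, $s_zo=y$) precisely when $s_zy=o$, i.e.\ precisely when $z$ is a square root of $y$. Hence the statement is equivalent to: every $y\in M$ has a unique square root.

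For the last claim, use exponentiality to write $z=\Exp_oZ$ and $y=\Exp_oY$ for unique $Z,Y\in\m$. Applying $s_{\Exp_o\frac{X}{2}}s_o=\exp X$ with $X=2Z$, and using $s_oo=o$ together with $\exp(W)\cdot o=\Exp_oW$ for $W\in\m$, we get
\[
s_zo = s_zs_oo = (s_zs_o)\cdot o = \exp(2Z)\cdot o = \Exp_o(2Z).
\]
Therefore $z$ is a square root of $y$ if and only if $\Exp_o(2Z)=\Exp_oY$, which by injectivity of $\Exp_o$ is equivalent to $Z=\tfrac12Y$. This gives existence ($z=\Exp_o\tfrac12Y$ is a midpoint) and uniqueness at once.

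The computation itself is short; the parts that need a little care are the equivariant reduction to $o$ (relying on the fact that automorphisms intertwine the symmetries and on transitivity of $G$) and the elementary identification of ``midpoint of $o$ and $y$'' with ``square root of $y$.'' The only external inputs are the two standard facts from the introduction, $s_{\Exp_o\frac{X}{2}}s_o=\exp X$ and $\Exp_oW=\exp(W)H$ on $G/H$, so there is no real obstacle beyond invoking them correctly.
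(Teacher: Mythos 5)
Your proof is correct and takes essentially the same approach as the paper: both identify the unique midpoint as the image of half the $\Exp$-preimage, with existence and uniqueness falling out of $\Exp$ being a global diffeomorphism. The only cosmetic difference is that the paper verifies $s_z x=y$ via the geodesic $c(t)=\Exp_x(t\Exp_x^{-1}(y))$ directly at $x$, whereas you first translate to the base point $o$ and use the transvection identity $s_{\Exp_o\frac{X}{2}}s_o=\exp X$.
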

\begin{proof}
Let $x,y\in M$, and $c:\fR\to M$ be the geodesic $c(t)=\Exp_x(t\Exp_x^{-1}(y))$. It is easily checked that $z=c(1/2)$ is a midpoint for $x$ and $y$, and that it is unique. 
\end{proof}

To prove that simple connectedness and local exponentiality together imply exponentiality, we first address the solvable and semisimple cases.

\begin{lem}
\label{lem:simply-connected-realizations}
Let $M$ be simply connected, and $(G,H,\sigma)$ be a transvection triple realizing it.
Denote $\tilde G$ the universal cover of $G$, and $\tilde\sigma:\tilde G\to \tilde G$ the lift of $\sigma$ to $\tilde G$.
Then $M\cong G/(G^\sigma)_0 \cong \tilde G/\tilde G^{\tilde\sigma}$ and $\tilde G^{\tilde\sigma}$ is connected.
\end{lem}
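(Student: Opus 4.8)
The statement asserts three things about a simply connected symmetric space $M$ realized by a transvection triple $(G,H,\sigma)$: that $M\cong G/(G^\sigma)_0$, that $M\cong\tilde G/\tilde G^{\tilde\sigma}$, and that $\tilde G^{\tilde\sigma}$ is connected. The plan is to work up the covering tower $\tilde G\to G$ and track what happens to the fixed-point subgroups and the quotients.

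**Step 1: The identity component suffices downstairs.** First I would show $M\cong G/(G^\sigma)_0$. Since $M$ is simply connected and $G\to M$ is a principal $H$-bundle with $H$ open in $G^\sigma$, the long exact homotopy sequence gives that $H$ meets every connected component of $G$ that the base "sees"; more to the point, the fibration $G/H\to G/(G^\sigma)_0$ would have discrete fiber $G^\sigma/(G^\sigma)_0$ (here I use that $H$ is open in $G^\sigma$, hence $(G^\sigma)_0\subseteq H$ and $H/(G^\sigma)_0$ is a subgroup of the component group), and a simply connected space cannot nontrivially cover a connected manifold. Hence the fiber is trivial and $H=(G^\sigma)_0$, giving $M\cong G/(G^\sigma)_0$. (One must check $G/(G^\sigma)_0$ is connected, which is immediate since $G$ is.)

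**Step 2: Lift to the universal cover and compare fixed-point sets.** Let $p:\tilde G\to G$ be the universal cover with $\Gamma=\ker p$ central and discrete, and $\tilde\sigma$ the lift of $\sigma$ fixing $\tilde e$. The map $p$ restricts to a homomorphism $\tilde G^{\tilde\sigma}\to G^\sigma$. I would argue that $p$ maps $(\tilde G^{\tilde\sigma})_0$ onto $(G^\sigma)_0$: on Lie algebras $d p$ is an isomorphism intertwining $d\tilde\sigma$ and $d\sigma$, so $\Lie(\tilde G^{\tilde\sigma})=\Lie(G^\sigma)=\h$, and a connected Lie group is determined by its Lie algebra inside the ambient group up to covering — combined with the fact that $(G^\sigma)_0$ is the image of the exponential of $\h$, which lifts. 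Therefore $p$ induces a map $\tilde G/(\tilde G^{\tilde\sigma})_0\to G/(G^\sigma)_0\cong M$, and I claim it is a diffeomorphism: it is a covering map (both spaces are homogeneous under the respective groups and the map is a surjective submersion with discrete fibers), and its source is simply connected — indeed the fibration $(\tilde G^{\tilde\sigma})_0\to\tilde G\to\tilde G/(\tilde G^{\tilde\sigma})_0$ has simply connected total space and connected fiber, and one checks $(\tilde G^{\tilde\sigma})_0$ is connected by construction — while $M$ is simply connected, so the covering is trivial. This gives $M\cong\tilde G/(\tilde G^{\tilde\sigma})_0$.

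**Step 3: $\tilde G^{\tilde\sigma}$ is connected, and fold the identity component back in.** The crux, and the step I expect to be the main obstacle, is upgrading $(\tilde G^{\tilde\sigma})_0$ to all of $\tilde G^{\tilde\sigma}$. The standard tool is that for a simply connected Lie group, the fixed-point set of an involution is connected — this is a theorem of Borel–de~Siebenthal/Mostow type (it follows, e.g., from the fact that $\tilde G/\tilde G^{\tilde\sigma}$, being diffeomorphic to the $(-1)$-eigenspace region, retracts equivariantly and the fibration forces $\pi_0(\tilde G^{\tilde\sigma})\hookrightarrow\pi_1$ of the base). So I would either invoke this directly or reprove it in our situation: from Step 2 we have the fibration $\tilde G^{\tilde\sigma}\to\tilde G\to\tilde G/\tilde G^{\tilde\sigma}$; the base $\tilde G/\tilde G^{\tilde\sigma}$ is a quotient of the simply connected $M\cong\tilde G/(\tilde G^{\tilde\sigma})_0$ by the finite group $\tilde G^{\tilde\sigma}/(\tilde G^{\tilde\sigma})_0$, so $\pi_1(\tilde G/\tilde G^{\tilde\sigma})\cong\tilde G^{\tilde\sigma}/(\tilde G^{\tilde\sigma})_0$; feeding this into the homotopy exact sequence with $\pi_1(\tilde G)=\pi_0(\tilde G)=0$ forces $\pi_1(\tilde G/\tilde G^{\tilde\sigma})=0$, hence $\tilde G^{\tilde\sigma}=(\tilde G^{\tilde\sigma})_0$ is connected. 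Combining with Step 2 yields $M\cong\tilde G/\tilde G^{\tilde\sigma}$ with $\tilde G^{\tilde\sigma}$ connected, completing the proof. The delicate point to get right is that $M$ is the \emph{simply connected} quotient in Step 2 (not merely a quotient), for which one genuinely uses that $G$ is the transvection group — or at least that $(G,H,\sigma)$ was chosen with $H=(G^\sigma)_0$ as arranged in Step 1 — so that the fiber $(\tilde G^{\tilde\sigma})_0$ is exactly the preimage data making the covering trivial.
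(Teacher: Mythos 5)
Your overall architecture matches the paper's: Step 1 is exactly the paper's first paragraph (simple connectedness of $M$ kills the discrete fiber, so $H=(G^\sigma)_0$), and your Step 2, phrased as a comparison of simply connected coverings, is equivalent to the paper's observation that $\pi^{-1}(H)$ is connected by the same homotopy-exact-sequence argument and hence equals $(\tilde G^{\tilde\sigma})_0$. One small slip in Step 1: the natural map goes $G/(G^\sigma)_0\to G/H$ (since $(G^\sigma)_0\subseteq H$), with fiber $H/(G^\sigma)_0$, and the relevant principle is that a connected covering \emph{of} a simply connected base is trivial --- not that ``a simply connected space cannot nontrivially cover a connected manifold'' (it can: $\fR\to S^1$). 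The conclusion survives once the arrows are pointed the right way.

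The genuine problem is your proposed ``reproof'' in Step 3 of the connectedness of $\tilde G^{\tilde\sigma}$: it is circular. You compute $\pi_1(\tilde G/\tilde G^{\tilde\sigma})$ twice --- once as the deck group $\tilde G^{\tilde\sigma}/(\tilde G^{\tilde\sigma})_0$ of the covering by the simply connected $\tilde G/(\tilde G^{\tilde\sigma})_0$, and once from the homotopy exact sequence of $\tilde G^{\tilde\sigma}\to\tilde G\to\tilde G/\tilde G^{\tilde\sigma}$, which with $\pi_1(\tilde G)=\pi_0(\tilde G)=0$ gives $\pi_1(\tilde G/\tilde G^{\tilde\sigma})\cong\pi_0(\tilde G^{\tilde\sigma})=\tilde G^{\tilde\sigma}/(\tilde G^{\tilde\sigma})_0$. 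These are the \emph{same} answer; nothing forces it to vanish, and indeed nothing purely homotopy-theoretic can, since the statement that fixed points of involutions of simply connected Lie groups form a connected subgroup is a genuine structural theorem. The paper simply cites it (\cite[IV, Theorem 3.4]{loos_symmetric_1969}), which is your ``invoke this directly'' option; that route is correct and is what you should do. Your fallback sketch, as written, proves nothing, and the claimed contradiction between the two $\pi_1$ computations does not exist.
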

\begin{proof}
We know that $(G^\sigma)_0 \subset H \subset G^\sigma$.
But since $G$ is connected and $M$ is simply connected, $H$ must be connected, which yields $M\cong G/(G^\sigma)_0$.

Denote $\pi:\tilde G\to G$ the covering map.
Recall that the fixed point subgroup of an involution of a simply connected Lie group is always connected \cite[Chapter IV, Theorem 3.4]{loos_symmetric_1969}, so $\tilde G^{\tilde \sigma}$ is connected.
On the other hand, we have $M\cong \tilde G/\pi^{-1}(H)$. But $(\tilde G^{\tilde \sigma})_0$ is an open subgroup of $\pi^{-1}(H)$ and, by the same argument as in the foregoing paragraph, $\pi^{-1}(H)$ is connected. Hence $\pi^{-1}(H)=(\tilde G^{\tilde \sigma})_0=\tilde G^{\tilde \sigma}$.
\end{proof}

\begin{prop}[{\cite[Proposition 2.1]{benoist_multiplicite_1984}}]
\label{prop:Benoist}
Let $(G,\sigma)$ be a symmetric pair, with $G$ exponential. Then,
\begin{enumerate}[label=(\arabic*)]
\item the exponential map is a diffeomorphism from $\h$ onto $G^\sigma$;
\item the exponential map is a diffeomorphism from $\m$ onto $P$;
\item \label{prop:BenoistPt3} the multiplication $m$ is a diffeomorphism from $P\times G^\sigma$ onto $G$. \qedhere
\end{enumerate}
\end{prop}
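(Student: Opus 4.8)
The plan is to reduce the entire statement to two facts about an exponential $G$: that $\exp\colon\g\to G$ is a global diffeomorphism, and that it intertwines $\sigma$ with the Lie algebra involution, $\sigma(\exp X)=\exp(d\sigma_e X)$, where $d\sigma_e$ has $(+1)$-eigenspace $\h=\g^\sigma$ and $(-1)$-eigenspace $\m$. Granting these, I would get (1) and (2) immediately, and (3) by an explicit ``polar decomposition'' argument.

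For (1) and (2): writing an arbitrary element of $G$ as $\exp X$ with $X\in\g$, injectivity of $\exp$ gives $\sigma(\exp X)=\exp X\iff d\sigma_e X=X\iff X\in\h$, and $\sigma(\exp X)=(\exp X)^{-1}=\exp(-X)\iff d\sigma_e X=-X\iff X\in\m$. Hence $G^\sigma=\exp(\h)$ and $P=\exp(\m)$; since $\exp$ is a global diffeomorphism of $\g$ and $\h,\m$ are linear subspaces, its restrictions to them are diffeomorphisms onto $G^\sigma$ and $P$ respectively. In particular $P$ is connected, so it coincides with its identity component $G_\sigma=\{g\sigma(g)^{-1}\mid g\in G\}$, and the squaring map $P\to P$ — which under the identification $P\cong\m$ of part (2) is just $X\mapsto 2X$, since $(\exp X)^2=\exp(2X)$ for $X\in\m$ — is a diffeomorphism. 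I would write $r\colon P\to P$ for its (smooth) inverse, so that $r(\exp X)=\exp(X/2)$ for $X\in\m$.

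For (3): let $\nu\colon G\to G$ be the smooth map $\nu(g)=g\sigma(g)^{-1}$, which by definition of $G_\sigma$ and the equality $G_\sigma=P$ takes values in $P$. I claim $m\colon P\times G^\sigma\to G$, $(p,h)\mapsto ph$, is a diffeomorphism with inverse the manifestly smooth map $g\mapsto\bigl(r(\nu(g)),\,r(\nu(g))^{-1}g\bigr)$. First I would record the identity $\nu(ph)=p^2$ for $p\in P$, $h\in G^\sigma$, using $\sigma(p)=p^{-1}$ and $\sigma(h)=h$; this shows the proposed map is a left inverse of $m$, as $r(\nu(ph))=r(p^2)=p$ and then $r(\nu(ph))^{-1}(ph)=h$. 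Conversely, given $g\in G$, I would set $p=r(\nu(g))\in P$, so that $p^2=\nu(g)=g\sigma(g)^{-1}$, and $h=p^{-1}g$; then $\sigma(h)=\sigma(p)^{-1}\sigma(g)=p\,\sigma(g)=p^{-1}(p^2\sigma(g))=p^{-1}g=h$, so $h\in G^\sigma$ and $ph=g$, giving a right inverse. As $m$ and its two-sided inverse are both smooth, $m$ is a diffeomorphism.

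The only place where exponentiality is genuinely used beyond parts (1)–(2) is in (3): one needs $P$ to be connected, so that $\nu$ actually surjects onto $P$ rather than merely onto $G_\sigma$, and one needs the squaring map on $P$ to be invertible in order to define $r$; both come from part (2). I expect this passage — choosing the correct explicit inverse for $m$ and checking that the resulting $h$ lies in $G^\sigma$ — to be the only step requiring any care, the rest being a direct identity-chase from $\exp$ being an equivariant diffeomorphism, with no role for compactness or semisimplicity.
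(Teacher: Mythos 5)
Your proof is correct and complete: the reduction to the two facts that $\exp\colon\g\to G$ is a global diffeomorphism intertwining $\sigma$ with $d\sigma_e$, the identifications $G^\sigma=\exp\h$ and $P=\exp\m$, and the explicit inverse of $m$ built from $\nu(g)=g\sigma(g)^{-1}$ and the square root on $P$ all check out. The paper itself gives no proof, only the citation to Benoist, and your argument is the standard polar-decomposition proof of that result, so there is nothing to compare beyond noting that your derivation is a valid self-contained substitute for the reference.
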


\begin{prop}[{\cite[Corollary, p.\ 171]{loos_symmetric_1969}}]
\label{prop:Loos-cor-p171}
Let $M$ be a simply connected symmetric space with transvection group $G$, and let $\tilde G$ be its universal cover.
Then $G\cong \tilde G/Z(\tilde G)^\sigma$ where $Z(\tilde G)^\sigma=\{z\in Z(\tilde G) \mid \sigma(g)=g \}$.
\end{prop}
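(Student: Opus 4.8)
The plan is to pin down the kernel of the universal covering homomorphism $\pi\colon\tilde G\to G$: I will show that $\Ker\pi=Z(\tilde G)^{\tilde\sigma}$ (where $\tilde\sigma$ is the lift of $\sigma=$ conjugation by $s_o$, as in Lemma~\ref{lem:simply-connected-realizations}, and $Z(\tilde G)^{\tilde\sigma}=\{z\in Z(\tilde G)\mid\tilde\sigma(z)=z\}$), after which the isomorphism $G\cong\tilde G/Z(\tilde G)^{\tilde\sigma}$ is immediate since $\pi$ is onto. The whole argument is essentially a repackaging of Lemma~\ref{lem:simply-connected-realizations} together with the fact that the transvection group, being by definition a subgroup of $\Aut(M)$, acts faithfully on $M$.

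First I would establish $\Ker\pi\subseteq Z(\tilde G)^{\tilde\sigma}$. The kernel of a covering homomorphism of a connected Lie group is a discrete central subgroup, so $\Ker\pi\subseteq Z(\tilde G)$. Moreover, applying Lemma~\ref{lem:simply-connected-realizations} to the transvection triple $(G,H,\sigma)$ (with $H=(G^\sigma)_0$ the stabilizer of $o$, which is connected since $M$ is simply connected) gives $\pi^{-1}(H)=\tilde G^{\tilde\sigma}$; since $\Ker\pi\subseteq\pi^{-1}(H)$, every element of $\Ker\pi$ is fixed by $\tilde\sigma$. Hence $\Ker\pi\subseteq Z(\tilde G)^{\tilde\sigma}$.

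For the reverse inclusion, let $z\in Z(\tilde G)^{\tilde\sigma}$, so in particular $z\in\tilde G^{\tilde\sigma}$. From $\pi\circ\tilde\sigma=\sigma\circ\pi$ it follows that the connected subgroup $\pi(\tilde G^{\tilde\sigma})$ is contained in $G^\sigma$, hence in its identity component $(G^\sigma)_0=H$ (the equality again from Lemma~\ref{lem:simply-connected-realizations}); thus $\pi(z)\in H$. On the other hand $\pi(z)\in Z(G)$, because $\pi$ is surjective. But any $w\in Z(G)\cap H$ satisfies $w\cdot gH=gwH=gH$ for all $g\in G$, i.e.\ it acts trivially on $M\cong G/H$, so $w=e$ by faithfulness of the transvection action. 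Therefore $\pi(z)=e$, i.e.\ $z\in\Ker\pi$. Combining the two inclusions yields $\Ker\pi=Z(\tilde G)^{\tilde\sigma}$, and hence $G\cong\tilde G/\Ker\pi=\tilde G/Z(\tilde G)^{\tilde\sigma}$.

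I do not expect a genuine obstacle here: all the real work sits in Lemma~\ref{lem:simply-connected-realizations}, which in turn rests on the Loos connectedness theorem for fixed-point subgroups of involutions of simply connected Lie groups. The one point deserving a line of care is that $\pi(\tilde G^{\tilde\sigma})$ is connected, being the continuous image of the connected group $\tilde G^{\tilde\sigma}$, and therefore lands in $(G^\sigma)_0$ rather than merely in $G^\sigma$.
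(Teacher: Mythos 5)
Your proof is correct. Note that the paper does not actually prove this proposition: it is quoted verbatim from Loos (Corollary, p.~171 of \cite{loos_symmetric_1969}), so there is no in-paper argument to compare against. Your derivation is a clean self-contained substitute: the inclusion $\Ker\pi\subseteq Z(\tilde G)^{\tilde\sigma}$ follows from discreteness of the kernel plus $\pi^{-1}(H)=\tilde G^{\tilde\sigma}$ (Lemma~\ref{lem:simply-connected-realizations}, which is logically prior and so safe to invoke), and the reverse inclusion correctly exploits that the transvection group is by definition a subgroup of $\Aut(M)$, so a central element lying in the stabilizer $H$ acts trivially on $G/H$ and must be the identity. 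The only cosmetic remark is that you do not need the connectedness of $\pi(\tilde G^{\tilde\sigma})$ for the reverse inclusion: $z\in\tilde G^{\tilde\sigma}=\pi^{-1}(H)$ already gives $\pi(z)\in H$ directly.
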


\begin{prop}
\label{prop:solvablesympair-pexp}
Let $M$ be solvable. 
If $M$ is connected, simply connected, and locally exponential, then $M$ and its transvection group $G$ are exponential.
\end{prop}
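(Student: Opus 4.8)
The plan is to reduce the statement, through the transvection realization, to the classical Dixmier--Saito criterion for solvable Lie groups, the decisive new input being that for a solvable symmetric space the isotropy subalgebra sits inside $[\g,\g]$. Fix a transvection triple $(G,H,\sigma)$, so that $\g=\m\oplus\h$ is the standard embedding and $\h=[\m,\m]$. Combining Lemma~\ref{lem:sympair-loc-exp-iff-M-loc-exp} with the definition of local exponentiality, I would first record that $M$ is locally exponential if and only if, for every $X\in\m$, the operator $\ad X$ on $\g$ has no eigenvalue in $i\fR\setminus\{0\}$: indeed $\lambda\in i\fR\setminus\{0\}$ is an eigenvalue of $\ad X$ exactly when $\lambda^{2}<0$ is an eigenvalue of $Y\mapsto[Y,X,X]$.

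The heart of the argument is to upgrade this eigenvalue condition from $\m$ to all of $\g$. Since $\h=[\m,\m]\subseteq[\g,\g]$ and $\g$ is solvable, Lie's theorem gives a basis of $\g_{\fC}$ simultaneously triangularizing $\ad(\g_{\fC})$; in such a basis commutators are strictly upper-triangular, so the diagonal functionals $\lambda_{1},\dots,\lambda_{n}\colon\g_{\fC}\to\fC$ --- whose values at $Z$ are the eigenvalues of $\ad Z$ with multiplicity --- vanish on $[\g,\g]$, hence on $\h$. Thus for $X=X_{\h}+X_{\m}\in\g$ one has $\lambda_{i}(X)=\lambda_{i}(X_{\m})$, so $\ad X$ and $\ad X_{\m}$ have the same eigenvalues, which avoid $i\fR\setminus\{0\}$ by the previous paragraph. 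Hence $\g$ satisfies the hypothesis of the Dixmier--Saito theorem \cite{dixmier_application_1957,saito_sur_1957}, and the universal cover $\tilde G$ of $G$, being simply connected and solvable, is exponential; by Lemma~\ref{lem:simply-connected-realizations}, $M\cong\tilde G/\tilde G^{\tilde\sigma}$.

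It remains to descend from $\tilde G$ to $G$ and then to $M$. By Proposition~\ref{prop:Loos-cor-p171}, $G\cong\tilde G/Z(\tilde G)^{\sigma}$; since $\exp_{\tilde G}$ is bijective, $\exp X$ is central iff $\Ad(\tilde G)X=X$ iff $X\in\mathfrak z(\g)$, so $Z(\tilde G)=\exp(\mathfrak z(\g))$ and $Z(\tilde G)^{\sigma}=\exp(\mathfrak z(\g)\cap\h)$. But in the standard embedding $\h=[\m,\m]$ acts faithfully on $\m$, so $\mathfrak z(\g)\cap\h=0$ and therefore $G\cong\tilde G$ is exponential. Finally, apply Proposition~\ref{prop:Benoist} to the symmetric pair $(G,\sigma)$: the exponential map is a diffeomorphism $\m\to P$, so $P$ is connected and equals $G_{\sigma}$; since $\Exp_{o}X=\exp(X)\cdot o$ corresponds under the quadratic representation $Q\colon M\xrightarrow{\sim}G_{\sigma}$ to $X\mapsto\exp(2X)$, the map $\Exp_{o}$ is a diffeomorphism, i.e.\ $M$ is exponential.

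The step I expect to require the most care is the second one: making precise, via Lie's theorem over $\fC$, that the eigenvalues of $\ad(X_{\h}+X_{\m})$ coincide with those of $\ad X_{\m}$ because the triangularizing functionals annihilate $[\g,\g]\supseteq\h$ --- this is exactly what localizes the problem to $\m$, where the assumption lives. A secondary subtlety is that the proposition asks for the transvection group $G$ itself to be exponential, not merely its universal cover, which is why one needs $\mathfrak z(\g)\cap\h=0$ and not just connectedness of $Z(\tilde G)$.
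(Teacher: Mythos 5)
Your proposal is correct and follows essentially the same route as the paper's proof: triangularize $\g_\fC$ via Lie's theorem so that the eigenvalue functionals of $\ad$ vanish on $[\g,\g]\supseteq\h=[\m,\m]$, reduce to the $\m$-component where local exponentiality (via Lemma~\ref{lem:sympair-loc-exp-iff-M-loc-exp}) forbids purely imaginary eigenvalues, invoke Dixmier--Saito for $\tilde G$ and Proposition~\ref{prop:Benoist} for the quotient, and finally kill $Z(\tilde G)^{\tilde\sigma}$ using injectivity of $\exp_{\tilde G}$ and the faithfulness of $[\m,\m]$ on $\m$. The only differences are cosmetic (e.g.\ citing $\h\subseteq[\g,\g]$ directly rather than its containment in the nilradical, and the order in which Benoist's proposition is applied).
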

\begin{proof}
With the notation of Lemma~\ref{lem:simply-connected-realizations}, we have $M\cong \tilde G/\tilde G^{\sigma}$.
Since $\g$ is solvable and is the transvection Lie algebra, we see that $\h=[\m,\m]$ is contained in the nilradical of $\g$ so that for all $X\in\h$, $\ad X$ 
only has null eigenvalues.
By Lie's theorem, the complexification of $\g$ may be represented by upper triangular matrices, and the representatives of $\h$ must then be strictly upper triangular. This shows that the eigenvalues of an element $X=X_\h+X_\m\in\g$ are equal to those of $X_\m\in\m$, which cannot be purely imaginary. Hence $\tilde G$ is exponential by the Dixmier--Saito Theorem, and consequently $\tilde G/\tilde G^{\tilde\sigma}$ is exponential by Proposition~\ref{prop:Benoist}.

We now show that $G=\tilde G$.
By Proposition~\ref{prop:Loos-cor-p171},
all we need to see is that $Z(\tilde G)\cap \tilde G^{\tilde \sigma}$ is reduced to a point.
Let $z\in Z(\tilde G)\cap G^\sigma$. Since $\tilde G$ is exponential, there exists a unique $Z\in \g$ such that $z=e^Z$.
Since $z$ is in the center of $\tilde G$, we have
$
e^Z=ge^Zg^{-1}=e^{\Ad_gZ},
$
so that by exponentiality, $Z=\Ad_gZ$  for all $g\in \tilde G$, and $Z$ is in the center of $\g$.
But $z$ is also in $\tilde G^{\tilde \sigma}$, so that $Z\in\h$.
Since $\g$ is the transvection algebra, the action of $\h$ on $\m$ is effective, and the only central element in $\h$ is $0$.
Hence $z=e$.
\end{proof}

\begin{prop}
\label{prop:semisimplesympair-pexp}
Let $M$ be semisimple.
If $M$ is connected, simply connected and locally exponential, then it is a Riemannian symmetric space of the noncompact type. 
In particular, it is exponential.
\end{prop}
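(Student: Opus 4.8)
The plan is to show that local exponentiality forces the involution on the transvection Lie algebra to be a Cartan involution. Let $(G,G^\sigma,\sigma)$ be a transvection triple, so that $\g$ is semisimple, and write $\g=\h\oplus\m$ for the eigenspace decomposition; since $\g$ is the standard embedding of $\m$, the subspace $\m$ generates $\g$. By the definition of local exponentiality together with Lemma~\ref{lem:sympair-loc-exp-iff-M-loc-exp}, no $\ad X$ with $X\in\m$ has a nonzero purely imaginary eigenvalue: such a $\lambda$ would produce the strictly negative eigenvalue $\lambda^2$ of $Y\mapsto[Y,X,X]$.

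First I would choose a Cartan involution $\theta$ of $\g$ commuting with $\sigma$ — a classical fact for finite-order automorphisms of a real semisimple Lie algebra — with Cartan decomposition $\g=\k\oplus\p$ and associated positive definite form $B_\theta(\cdot,\cdot)=-B(\cdot,\theta\,\cdot)$, where $B$ is the Killing form. Since $\theta$ and $\sigma$ commute, $\m=(\k\cap\m)\oplus(\p\cap\m)$ and $\h=(\k\cap\h)\oplus(\p\cap\h)$. For $X\in\k\cap\m$ the operator $\ad X$ is skew-symmetric for $B_\theta$, hence has purely imaginary eigenvalues; local exponentiality forces them all to be $0$, so $\ad X=0$ and, $\g$ being semisimple, $X=0$. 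Thus $\k\cap\m=0$, whence $\k\subseteq\h$ and $\m\subseteq\p$. Then for $X\in\m\subseteq\p$ and $Y\in\p\cap\h$ one has $[X,Y]\in[\p,\p]\cap[\m,\h]\subseteq\k\cap\m=0$, so $\p\cap\h$ centralizes $\m$; as $\m$ generates the semisimple algebra $\g$, this forces $\p\cap\h=0$. Hence $\h=\k$, $\m=\p$, and $\sigma=\theta$ is a Cartan involution.

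It remains to rule out compact ideals: a compact simple ideal $\g_1$ is fixed pointwise by $\sigma=\theta$ (the only Cartan involution of a compact semisimple Lie algebra being the identity), hence $\g_1\subseteq\k=\h$, so $[\g_1,\m]\subseteq\g_1\cap\m\subseteq\k\cap\m=0$ and $\g_1$ is central in $\g$, thus trivial. Therefore, using simple connectedness and Lemma~\ref{lem:simply-connected-realizations} to realize $M$ as $\tilde G/\tilde G^{\tilde\sigma}$ — the quotient of the simply connected noncompact semisimple group $\tilde G$ by the connected subgroup with Lie algebra $\k$ — we conclude that $M$ is a Riemannian symmetric space of the noncompact type. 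It is then exponential: such a space is complete, simply connected, and of nonpositive sectional curvature, since for $X,Y\in\m=\p$ one has $\langle R(X,Y)Y,X\rangle=-B([[X,Y],Y],X)=B([X,Y],[X,Y])\le 0$ because $[X,Y]\in\k$ and $B|_\k$ is negative definite, so the Cartan--Hadamard theorem makes $\Exp_o$ a global diffeomorphism (alternatively one may invoke the classical theory, e.g.\ \cite{helgason_differential_1978}). The crux — and the only place local exponentiality enters — is the $B_\theta$-skew-symmetry argument giving $\k\cap\m=0$; the rest is structure theory.
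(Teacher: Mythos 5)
Your proof is correct and follows essentially the same route as the paper: choose a Cartan involution $\theta$ commuting with $\sigma$, use skew-symmetry of $\ad X$ with respect to $B_\theta$ for $X\in\k\cap\m$ to conclude $\k\cap\m=0$ from local exponentiality, then deduce $\sigma=\theta$ and invoke the global theory of noncompact-type spaces. The only cosmetic differences are that you derive $\p\cap\h=0$ via the centralizer of $\m$ rather than directly from $\h=[\m,\m]$, you make the exclusion of compact ideals explicit (the paper leaves it implicit in the transvection hypothesis), and you offer Cartan--Hadamard in place of the cited polar decomposition.
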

\begin{proof}
Let $(G,H,\sigma)$ be a transvection triple realizing $M$ where, by hypothesis, $G$ is semisimple.
Let $\theta$ be a Cartan involution commuting with $\sigma$ (\cite{berger_les_1957}, \cite[p.\ 153, Theorem 2.1]{loos_symmetric_1969}), and $B$ be the Killing form of $\g$. 
We have the decompositions in eigen\-spaces
\begin{align*}
\g &= \h \oplus \m \quad\text{for $\sigma$} \\
&= \k \oplus \p \quad\text{for $\theta$} \\
&= \h_\k \oplus \h_\p \oplus \m_\k \oplus \m_\p ,
\end{align*}
where the spaces of the last line are the obvious intersections of those of the first two lines.
Now if $0\neq Z\in\m_\k$, then $Z$ is in $\k$ 
so $\ad Z$ is a skewsymmetric matrix for the scalar product $B_\theta(X,Y)=-B(X,\theta Y)$.
Since the adjoint representation is faithful, $\ad Z$ has non-zero purely imaginary eigenvalues, contradicting local exponentiality. Hence $\m_\k=\{0\}$.
But then $\h=[\m,\m]=[\m_\p,\m_\p]=\h_\k$, so that $\h_\p=0$.
We have thus shown that $\sigma=\theta$.

Since $\sigma$ is a Cartan involution on $G$ and $G$ is connected,
the global polar decomposition for Riemannian symmetric spaces of the noncompact type \cite[Chapter VI, Theorem 1.1]{helgason_differential_1978} implies that $M$ is exponential.
\end{proof}

Using these two special cases, we may now prove the general result by induction on the dimension of the radical.
The argument is inspired from \cite{rozanov_exponentiality_2009}.

\begin{thm}
\label{thm:sconn-and-loc-exp-implies-exp-diffeo}
If $M$ is simply connected and locally exponential, then it is exponential.
Moreover, it is a semidirect product $M_1\ltimes M_2$ of a semisimple and a solvable exponential subspaces, and the map $\m_1\times \m_2\to M:(X,Y) \mapsto \Exp_o \frac{X}{2} \perp \Exp_o Y$ is a diffeomorphism.
\end{thm}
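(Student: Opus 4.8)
The plan is to argue by induction on $d=\dim\r$, where $\r$ is the radical of the transvection algebra $\g$ of $M$; by Lemma~\ref{lem:simply-connected-realizations} I may replace the transvection triple of $M$ by $(\tilde G,\tilde G^{\tilde\sigma},\tilde\sigma)$, with $\tilde G$ the universal cover, so that $H=\tilde G^{\tilde\sigma}$ is connected and $M=\tilde G/\tilde G^{\tilde\sigma}$. When $\g$ is semisimple ($d=0$) or solvable ($d=\dim\g$), everything — including the splitting with one factor a point, for which the asserted map reduces to $\Exp_o$ — follows directly from Propositions~\ref{prop:semisimplesympair-pexp} and~\ref{prop:solvablesympair-pexp}. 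So assume $0<d<\dim\g$.

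For the inductive step I would first shrink the radical. Pick a minimal $\sigma$-invariant ideal $\a\subseteq\r$; since $\r$ is solvable, $\a$ is abelian. Let $A\subseteq\tilde G$ be the corresponding (closed, connected, simply connected, normal, $\sigma$-invariant) subgroup and $N=A\cdot o$ the associated normal symmetric subspace, which is flat and diffeomorphic to $\a\cap\m$. Then $\bar M:=M/N=\tilde G/(A\,\tilde G^{\tilde\sigma})$ is a symmetric space which is again simply connected (a simply connected space modulo a connected subgroup), again locally exponential (the spectrum of $\ad\bar X$ on $\g/\a$, for $\bar X\in\m/(\a\cap\m)$, is contained in that of $\ad X$ on $\g$, hence misses $i\fR\setminus\{0\}$), and whose transvection algebra is a quotient of $\g/\a$, so has radical of dimension $\le d-\dim\a<d$. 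By the induction hypothesis $\bar M$ is exponential.

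The crux is to lift exponentiality from $\bar M$ to $M$. Local exponentiality makes $\Exp_o$ a local diffeomorphism at every point (Lemma~\ref{lem:explocaldiffeo-SpecadX-ipiZ}, together with Lemma~\ref{lem:sympair-loc-exp-iff-M-loc-exp}), and the projection $\pi\colon M\to\bar M$ is a morphism of symmetric spaces, so $\pi\circ\Exp_o=\Exp^{\bar M}_o\circ d\pi_o$ with $\Exp^{\bar M}_o$ a diffeomorphism. Hence $\Exp_o$ is a diffeomorphism as soon as, for each $X\in\m$, the map $Z\mapsto\Exp_o(X+Z)$ sends $\a\cap\m$ bijectively onto the fibre $\pi^{-1}(\Exp^{\bar M}_o(\bar X))=\exp(X)\cdot N$. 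Writing $\exp(X+Z)=\exp(X)f(Z)$ with $f(Z)\in A$, the Baker--Campbell--Hausdorff formula — which truncates because $\a$ is an abelian ideal — yields $f(Z)=\exp_A\!\big(\psi(\ad X)^{-1}Z\big)$ with $\psi(u)=u/(1-e^{-u})$; its even part $\psi_{\mathrm{ev}}(\ad X)=\tfrac12\,\ad X\coth(\tfrac12\,\ad X)$ is a function of $(\ad X)^2$ and is invertible on $\h$ (hence on $\a\cap\h$), because local exponentiality — via Lemma~\ref{lem:sympair-loc-exp-iff-M-loc-exp} — keeps $(\ad X)^2|_\h$ away from the negative reals where this function has its zeros and poles. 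A short parity argument with the grading $\g=\h\oplus\m$ then shows $\psi(\ad X)^{-1}$ carries $\a\cap\m$ isomorphically onto a linear complement of $\a\cap\h$ in $\a$, whence $Z\mapsto f(Z)\cdot o$ is a diffeomorphism $\a\cap\m\to N$, as needed. I expect this fibrewise computation, the only place where the full force of local exponentiality is used, to be the main obstacle.

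It remains to exhibit the semidirect product structure (which does not use the induction). Fix a $\sigma$-invariant Levi decomposition $\g=\s\ltimes\r$ (possible since $\{\mathrm{id},\sigma\}$ is finite); since $\tilde G$ is simply connected this integrates to $\tilde G=S\ltimes R$ with $S$ simply connected semisimple and $R$ the radical subgroup, and $\tilde G^{\tilde\sigma}=S^\sigma R^\sigma$, both $S^\sigma,R^\sigma$ connected. Set $\m_1=\s\cap\m$, $\m_2=\r\cap\m$, $M_1=S\cdot o$, $M_2=R\cdot o$; then $\m=\m_1\oplus\m_2$ and $M_2$ is normal. Being a simply connected ($S$ being so), locally exponential, semisimple symmetric space, $M_1$ is exponential and Riemannian of the noncompact type by Proposition~\ref{prop:semisimplesympair-pexp}, which furnishes the polar decomposition $S=\exp(\m_1)\cdot S^\sigma$; being a simply connected, locally exponential, solvable symmetric space, $M_2$ is exponential by Proposition~\ref{prop:solvablesympair-pexp}, so $R=\exp(\m_2)\cdot R^\sigma$ with $\exp\colon\m_2\to R/R^\sigma$ a diffeomorphism. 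Substituting these into the twisted product $\tilde G=S\ltimes R$ and using that $\Ad(S^\sigma)$ commutes with $\sigma$, hence preserves $\m_2$ and normalizes $\exp(\m_2)$, the quotient by $\tilde G^{\tilde\sigma}=S^\sigma R^\sigma$ collapses to a diffeomorphism $M\cong\big(\exp(\m_1)\cdot o\big)\times\big(\exp(\m_2)\cdot o\big)\cong\m_1\times\m_2$; unwinding the identifications, this map is $(X,Y)\mapsto\exp(X)\exp(Y)\cdot o=\Exp_o\frac{X}{2}\perp\Exp_o Y$. Thus $M=M_1\ltimes M_2$ with $M_1,M_2$ as claimed.
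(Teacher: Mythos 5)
Your proposal is correct, and its skeleton --- induction on the dimension of the radical, base cases from Propositions~\ref{prop:semisimplesympair-pexp} and~\ref{prop:solvablesympair-pexp}, passage to the quotient by a minimal $\sigma$-invariant abelian normal subgroup, and a $\sigma$-stable Levi decomposition for the semidirect splitting --- is exactly that of the paper. Where you genuinely diverge is in lifting exponentiality from $\bar M=M/N$ back to $M$: the paper takes the preimage $F\subset\tilde G$ of the one-parameter subgroup $\{\exp tX\}$ through a given point of $\bar M$, notes that $F$ is solvable with $F/F^\sigma$ connected, simply connected and locally exponential, and applies Proposition~\ref{prop:solvablesympair-pexp} to get a unique preimage in each fibre; you instead compute $\Exp_o$ on the affine slice $X+(\a\cap\m)$ explicitly via the truncated Baker--Campbell--Hausdorff formula and reduce bijectivity onto the fibre to the invertibility of an even function of $\ad X$. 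Your route is more computational --- it essentially re-derives Helgason's formula fibrewise --- and avoids a second appeal to the Dixmier--Saito mechanism inside the induction, whereas the paper's is softer but uses Proposition~\ref{prop:solvablesympair-pexp} twice. One detail to fix in your fibre computation: for $Z\in\a\cap\m$, the class of $\frac{1-e^{-\ad X}}{\ad X}Z$ modulo $\a\cap\h$ is given by the \emph{even} part of $u\mapsto(1-e^{-u})/u$, which is $\frac{\sinh(\ad X)}{\ad X}$, not $\frac{1}{2}\ad X\coth(\frac{1}{2}\ad X)$ (that is the even part of $\psi$, not of $\psi^{-1}$); it is the invertibility of $\frac{\sinh(\ad X)}{\ad X}$ on $\a\cap\m$ --- guaranteed by local exponentiality via Lemmas~\ref{lem:explocaldiffeo-SpecadX-ipiZ} and~\ref{lem:sympair-loc-exp-iff-M-loc-exp} --- that makes $Z\mapsto f(Z)\cdot o$ a diffeomorphism onto $N\cong\a\cap\m$. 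With that adjustment the fibrewise step closes; the rest, including the identification of the fibres of $M\to S/S^\sigma$ with $\{\Exp_o\frac{X}{2}\perp\Exp_o Y \mid Y\in\m_2\}$, matches the paper.
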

\begin{proof}
Let $G$ be the universal cover of the transvection group of $M$, so that $M\cong G/G^\sigma$, and let $R$ be the radical of $G$. 
If $R$ is $0$-dimensional, the theorem reduces to Proposition~\ref{prop:semisimplesympair-pexp}, so let us assume the contrary. 

Since $G$ is simply connected, there exists a global Levi decomposition $G=S\ltimes R$ ---with $S$ semisimple and $R$ the non-trivial radical, both simply connected--- which is compatible with $\sigma$ in the sense that the latter stabilizes both $S$ and $R$.

Let $N$ be a minimal $\sigma$-invariant normal subgroup of $G$. 
It is contained in $R$, and is connected since otherwise its connected component containing $e$ would be preserved by $\sigma$ and, $G$ being connected, by conjugation by $G$. 
Similarly, $N$ is Abelian, since otherwise the commutant would be a smaller connected $\sigma$-invariant normal subgroup.
And since it is a connected subgroup of a simply connected solvable Lie group, $N$ is also closed and simply connected \cite{malcev_theory_1945}.
Moreover, as the fixed point subgroup of an involution of a simply connected Lie group, $N^\sigma$ is connected, so that $N/N^\sigma$ is simply connected.

Let $\hat G=G/N$, let $\pi:G\to\hat G$ be the natural projection, $\hat \sigma$ the induced involution and $\pi':G/G^\sigma\to \hat G/\hat G^{\hat\sigma}$ the natural morphism. 
Since $G/G^\sigma$ is connected and simply connected, and $N/N^\sigma$ is connected, $\hat G/\hat G^{\hat\sigma}$ is connected and simply connected. Moreover, the eigenvalues of $\ad \hat\m$ are to be found amongst those of $\ad \m$, so that $\hat G/\hat G^{\hat\sigma}$ is locally exponential and, by the inductive assumption, is exponential. 

Let $x\in G/G^{\sigma}$. Then there exists a unique $X\in\hat\m$ such that $\Exp_{e\hat G^{\hat\sigma}}X=\pi'(x)$. Denote $F\subset G$ the complete preimage through $\pi$ of $\{\exp tX \mid t\in \fR\}$. It is a $\sigma$-invariant subgroup isomorphic to $\exp \left<X'\right> \ltimes N$ for some $X'\in \m$ such that $\pi_*X'=X$.
Since $N/N^\sigma$ is connected and simply connected, $F/F^\sigma=F/N^\sigma$ also is, and thus the latter inherits local exponentiality from $G/G^\sigma$. Since $F$ is solvable, $F/F^\sigma$ is then exponential by Proposition~\ref{prop:solvablesympair-pexp} so that, denoting $\mathfrak{f}$ the Lie algebra of $F$, there exists a unique $X''\in\mathfrak{f}\cap\m$ such that $\Exp_{eG^\sigma}X''=x$. 

We thus have a bijection $\Exp_{eG^\sigma}$ between $\m$ and $G/G^\sigma$, and the local exponentiality of $M$ shows that it is everywhere a local diffeomorphism, hence a global diffeomorphism.

We now turn to the second assertion.
Consider the natural map $p:G/G^\sigma\to (G/R)/(G/R)^\sigma\cong S/S^\sigma$, and set $M_1=S/S^\sigma$ and $M_2=R/R^\sigma$.
An element of $M_1$ may be uniquely written as $\Exp_oX=\exp (X) S^\sigma$ for some $X\in\m_1$.
The complete inverse image of $\Exp_oX$ by $p$ is $\{\exp (X).rG^\sigma \mid r\in R\} = \{\exp (X).\exp (Y) G^\sigma \mid Y\in \m_2\} = \{\Exp_o(\frac{X}{2})\perp\Exp_o(Y) \mid Y\in \m_2\}$, and the map $(X,Y)\mapsto \Exp_o(\frac{X}{2})\perp\Exp_o(Y)$ is clearly a diffeomorphism.
\end{proof}

\begin{prop}
\label{prop:connected-subspace-of-diffeotoRn-is-diffeotoRn-and-has-Gsigma-connected}
Let $M$ be diffeomorphic to $\fR^n$.
If $(G,\sigma)$ is a transvection pair, then $G^\sigma$ is connected.
Moreover, every connected symmetric subspace is diffeomorphic to $\fR^n$.
\end{prop}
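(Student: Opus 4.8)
The plan is to establish the two assertions separately: the first is a short homotopy‑theoretic observation, while the second builds on the Koh--Loos fibration already used in the proof of Lemma~\ref{lem:exp-injective-implies-immersion-and-diffeotoRn}.

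For the first assertion, note that $M\cong\fR^n$ is in particular simply connected, and that a transvection pair $(G,\sigma)$ realizes $M$ as $G/G^\sigma$ with $G$ connected. The long exact homotopy sequence of the principal bundle $G^\sigma\hookrightarrow G\to G/G^\sigma\cong M$ contains the segment $\pi_1(M)\to\pi_0(G^\sigma)\to\pi_0(G)$, whose outer terms are trivial; hence $\pi_0(G^\sigma)$ is trivial, i.e.\ $G^\sigma$ is connected. (Alternatively, one may apply the argument of Lemma~\ref{lem:simply-connected-realizations} to the triple $(G,G^\sigma,\sigma)$.) I write $H=G^\sigma=\Stab_G(o)$ from now on.

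For the second assertion, let $N\subset M$ be a connected symmetric subspace. By homogeneity of $M$ we may move $N$ by an element of $G$ so that $o\in N$, and it suffices to treat this case. Then $\n:=T_oN$ is a Lie triple subsystem of $\m$ with $N=\Exp_o\n$; the subalgebra $\g_N:=\n\oplus[\n,\n]_\g\subset\g$ (where $[\n,\n]_\g$ denotes the span inside $\g$ of the $[X,Y]$, $X,Y\in\n$) is $\sigma$‑invariant, and the connected subgroup $G_N\subset G$ with Lie algebra $\g_N$ preserves $N$ and acts transitively on it, so $N\cong G_N/(G_N\cap H)$. Applying Koh's fibration to this realization exhibits $N$ as smoothly fibered over a connected compact Riemannian symmetric subspace $C=K_N\cdot o$ --- with $K_N$ a maximal compact subgroup of $G_N$ --- with fibers diffeomorphic to a Euclidean space. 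Thus $N$ is diffeomorphic to a Euclidean space as soon as $C$ is reduced to a point, i.e.\ as soon as $K_N\subset H$. (Equivalently, it suffices to show that $M$ has no nontrivial compact symmetric subspace.)

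The hypothesis enters as follows: Koh's fibration applied to $M=G/H$ itself fibers $M$ over $K\cdot o$, with $K$ a maximal compact subgroup of $G$, with fibers diffeomorphic to a Euclidean space; since $M\cong\fR^n$ is contractible this base is a point, so $K\subset H$. As every compact subgroup of a connected Lie group is conjugate into a maximal compact one, it follows that \emph{every} compact subgroup of $G$ fixes a point of $M$; in particular $K_N$ fixes some $p\in M$. The remaining --- and, I expect, genuinely delicate --- step is to upgrade this to the existence of a $K_N$‑fixed point lying \emph{in $N$}: granting a point $p'\in N$ fixed by $K_N$, re‑basing the realization of $N$ at $p'$ makes $C=K_N/(K_N\cap\Stab_{G_N}(p'))$ a point, which completes the proof. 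This upgrade is the symmetric‑space analogue of the elementary fact that a compact group of affine transformations of $\fR^m$ preserving an affine subspace and having a fixed point has a fixed point on that subspace; in the present setting I would try to deduce it from the $K_N$‑equivariance and surjectivity of $\Exp_p\colon T_pM\to M$ together with the total geodesy of $N$. Since $M$ need not be uniquely geodesic, making this final step rigorous is the principal obstacle.
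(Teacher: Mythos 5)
Your handling of the first assertion is acceptable: since a transvection pair realizing $M$ gives $M\cong G/G^\sigma$ with $G$ connected and $M\cong\fR^n$ simply connected, the segment $\pi_1(M)\to\pi_0(G^\sigma)\to\pi_0(G)$ of the homotopy exact sequence does force $G^\sigma$ to be connected. (The paper argues differently, by exhibiting $G/G^\sigma$ as the quotient of $M\cong\fR^n$ by the finite group $G^\sigma/(G^\sigma)_0$ and deriving a contradiction; your route is arguably cleaner. Be aware, though, that if the intended content is that the stabilizer of $o$ in the transvection group is the \emph{full} fixed-point group $G^\sigma$, then writing $M\cong G/G^\sigma$ at the outset presupposes part of what is being claimed, and the reduction of Lemma~\ref{lem:simply-connected-realizations} only gives $M\cong G/(G^\sigma)_0$, not $(G^\sigma)_0=G^\sigma$.)

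The second assertion is where the proposal has a genuine gap, which you yourself flag: you reduce everything to producing a $K_N$-fixed point lying \emph{in} $N$, and you cannot close that step. The reason you are stuck is that you are using only half of Koh's theorem. The theorem does not merely fiber a connected symmetric space over \emph{some} compact Riemannian symmetric subspace with Euclidean fibers; the base is the \emph{maximal} compact symmetric subspace through the base point, in the sense that it contains every compact connected symmetric subspace through that point. Granting this, the paper's argument is immediate and needs no fixed-point considerations at all: the maximal compact subspace of $M$ is compact, connected, and contractible (being the base of a fibration of the contractible $M$ with contractible fibers), hence a point; the base $C$ of the Koh fibration of $N$ is itself a compact connected symmetric subspace of $M$ through $o$, hence is contained in that point, hence is trivial; so $N$ is diffeomorphic to a Euclidean space. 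In other words, the object to which the hypothesis on $M$ should be applied is the compact \emph{subspace} $C\subset N\subset M$ directly, not the compact \emph{group} $K_N$ --- the detour through conjugacy of compact subgroups of $G$ and fixed points in $M$ is what creates the obstacle you cannot surmount.
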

\begin{proof}
Assume $G^\sigma$ is not connected.
Since the fixed points subgroup of an involution of a connected Lie group always has a finite number of connected components \cite[Chapter IV, Theorem 3.4]{loos_symmetric_1969}, $G/G^\sigma$ is then at the same time a symmetric space, and thus a manifold, and a quotient of $\fR^n$ by the (effective) action of the finite group $G^\sigma/(G^\sigma)_0$, hence an orbifold, a contradiction.
For the second assertion, by \cite[Theorem 1]{koh_affine_1965}, the maximal compact subspace of $M$ is trivial, hence any subspace of $M$ has the same property, and the same theorem yields the conclusion.
\end{proof}

We can now assemble the above results to prove Theorems~\ref{thm:A} and \ref{thm:B}.

\begin{proof}[Proof of Theorem~\ref{thm:A}]
The assertion \ref{thm:A:at-most-one-midpoint}$\Rightarrow$\ref{thm:A:exp-inj} follows from Lemma~\ref{lem:at-most-one-midpoint-implies-exp-inj},
\ref{thm:A:exp-inj}$\Rightarrow$\ref{thm:A:M-loc-exp} follows from Lemma~\ref{lem:exp-injective-implies-immersion-and-diffeotoRn},
\ref{thm:A:M-loc-exp}$\Leftrightarrow$\ref{thm:A:G-sigma-loc-exp} is proved in Lemma~\ref{lem:sympair-loc-exp-iff-M-loc-exp},
\ref{thm:A:M-loc-exp}$\Rightarrow$\ref{thm:A:exp-diffeo} is Theorem~\ref{thm:sconn-and-loc-exp-implies-exp-diffeo},
and finally \ref{thm:A:exp-diffeo}$\Rightarrow$\ref{thm:A:at-most-one-midpoint} follows from Lemma~\ref{lem:exp-diffeo-implies-unique-midpoint}.
\end{proof}

\begin{proof}[Proof of Theorem~\ref{thm:B}]
If $M$ is exponential, 
\ref{item:semisimplemidpointmap-implies-cartaninvolution}\ follows from Proposition~\ref{prop:semisimplesympair-pexp},
\ref{item:solvablemidpointmap-implies-exists-exponentialsymmetricpair}\ follows from Proposition \ref{prop:solvablesympair-pexp}, and
\ref{item:midpointmap-semidirect-product} follows from Theorem~\ref{thm:sconn-and-loc-exp-implies-exp-diffeo}.
\end{proof}

\begin{ex}
\label{ex:tangent-bundle}
The (canonical structure on the) tangent bundle of a locally exponential Lie triple system $\m$, defined by $T\m:=\m\oplus\m$ with
\begin{align*}
[(x,u),(y,v),(z,w)]
&=
([x,y,z],[u,y,z]+[x,v,z]+[x,y,w]),
\end{align*}
is also locally exponential.
Indeed, any operator $[(\cdot,\cdot),(y,v),(y,v)]$ has the form
$\left(\begin{smallmatrix}[\cdot,y,y] & 0 \\ [\cdot,v,y]+[\cdot,y,v] & [\cdot,y,y]\end{smallmatrix}\right)$, and thus has the same eigenvalues as $[\cdot,y,y]$.
This proves that the tangent bundle of a Riemannian symmetric space of the noncompact type is a non-semisimple, non-solvable, exponential symmetric space.
\exend
\end{ex}

\begin{ex}
A midpoint map is a smooth map $\gamma:M\times M\to M$ such that $s_{\gamma(x,y)}x=y$ for all $x,y\in M$.
There exist many \emph{finite} symmetric spaces with midpoint map (those spaces were called \emph{finite symsets} and studied by Nobusawa and his collaborators \cite{nobusawa_symmetric_1974}), the simplest non-trivial example being the space $M=\{x_1,x_2,x_3\}$ where the symmetry at a point interchanges the remaining two points, and the midpoint of two points yields the remaining point.
However, none of these finite spaces (apart from the 1-point space) embeds in any connected symmetric space with midpoint map.
Indeed, assume we have such an embedding and fix two points $o,x_0\in M$. Then the set $\{x_n\}_n$ defined by $x_n=\gamma(o,x_{n-1})$ is finite.
But one sees that it lies entirely on one single geodesic, so that the successive midpoints with $o$ should actually converge to $o$, contradicting the assumption.
\exend
\end{ex}

\begin{ex}
In \cite{bieliavsky_strict_2002}, in order to build non-formal deformation quantizations, a class of symmetric spaces is introduced as the first step of a potential inductive construction of all solvable symplectic symmetric spaces.
These (connected and simply connected) \emph{elementary solvable symplectic spaces} are realized by triples $(G,H,\sigma)$ where the Lie algebra of $G$ is a semi-direct product $\g=\a\ltimes\frb$ of two abelian Lie subalgebras stabilized by $\sigma$, such that there exists $\xi\in \g^*$ such that the 2-coboundary $\Omega:\g\times\g\to\fR$ defined by $\Omega(X,Y)=-\left<\xi,[X,Y]\right>$ restricts as a non-degenerate two-form on the $(-1)$-eigenspace $\m$ of $\sigma$ in $\g$. 
This implies that $\frb=\l\oplus\h$ with $\l\subset \m$, and that $\a$ and $\l$ are two lagrangians in involution for $\Omega|_{\m\times\m}$.

A quantization is built in \cite{bieliavsky_strict_2002} for all elementary solvable symplectic spaces which admit a midpoint map.
It is then proved that such a map exists if and only if the \emph{twisting map} $\phi:\a\to\a$, defined by $\Omega(\phi(A),L)=\left<\xi,\sinh(\ad A)L\right>$ for all $A\in\a$ and $L\in \l$, is a global diffeomorphism. 
It is easily seen that, if $\phi$ is a diffeomorphism then, for all $A\in \a$, $\cosh(\ad A):\l\to\l$ is invertible so that, $\a$ being abelian, $(\ad A)^2|_\m$ has no negative eigenvalues. Theorem~\ref{thm:A} implies that the converse is also true, giving a simple characterization of the spaces for which the constructions in \cite{bieliavsky_strict_2002} apply.
\exend
\end{ex}

\begin{cor}
\label{cor:subspaces-intersections-quotients-of-exponential-spaces}
Let $M$ be exponential. Then 
\begin{enumerate}[label={(\arabic*)}]
\item \label{cor:subspaces-intersections-quotients-of-exponential-spaces-1} a symmetric subspace $N$ is exponential if and only if midpoints of points in $N$ are in $N$, if and only if $N$ is connected;
\item \label{cor:subspaces-intersections-quotients-of-exponential-spaces-2} the intersection of any two exponential symmetric subspaces is empty or exponential;
\item \label{cor:subspaces-intersections-quotients-of-exponential-spaces-3} the quotient $M/N$ by a closed normal symmetric subspace is exponential if and only if $N$ is exponential.
\end{enumerate}
\end{cor}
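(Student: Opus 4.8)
The plan is to deduce parts~\ref{cor:subspaces-intersections-quotients-of-exponential-spaces-2} and~\ref{cor:subspaces-intersections-quotients-of-exponential-spaces-3} from part~\ref{cor:subspaces-intersections-quotients-of-exponential-spaces-1}, and to prove part~\ref{cor:subspaces-intersections-quotients-of-exponential-spaces-1} from Theorem~\ref{thm:sconn-and-loc-exp-implies-exp-diffeo} (a simply connected locally exponential space is exponential), Lemma~\ref{lem:exp-injective-implies-immersion-and-diffeotoRn} (an exponential space is diffeomorphic to a Euclidean space, hence simply connected) and Proposition~\ref{prop:connected-subspace-of-diffeotoRn-is-diffeotoRn-and-has-Gsigma-connected} (a connected symmetric subspace of such a space is again diffeomorphic to a Euclidean space), together with two elementary spectral observations. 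First, if $\m'\subseteq\m$ is a sub-Lie triple system, then $\m'$ is invariant under each operator $Y\mapsto[Y,X,X]$ with $X\in\m'$, so the spectrum of that operator restricted to $\m'$ is contained in its spectrum on $\m$; consequently local exponentiality is inherited by connected symmetric subspaces. Second, if $\r$ is a $\sigma$-invariant ideal of $\g$, a basis of $\g$ adapted to $\r$ puts every $\ad X$ into block upper-triangular form, so the spectrum of $\ad$ on $\g/\r$ is contained in that of $\ad$ on $\g$, and Lemma~\ref{lem:sympair-loc-exp-iff-M-loc-exp} then shows that the quotient Lie triple system of a locally exponential one is again locally exponential. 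In both cases, passing to a sub-spectrum only deletes eigenvalues and cannot produce a strictly negative one.

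For part~\ref{cor:subspaces-intersections-quotients-of-exponential-spaces-1}: if $N$ is connected, it is diffeomorphic to a Euclidean space by Proposition~\ref{prop:connected-subspace-of-diffeotoRn-is-diffeotoRn-and-has-Gsigma-connected}, hence simply connected; its Lie triple system $\m_N$ at $o$ is a sub-Lie triple system of $\m$, so $N$ is locally exponential, and Theorem~\ref{thm:sconn-and-loc-exp-implies-exp-diffeo} makes it exponential. If $N$ is exponential and $x,y\in N$, the midpoint of $x$ and $y$ taken inside $N$ (Lemma~\ref{lem:exp-diffeo-implies-unique-midpoint}) is also a midpoint of $x$ and $y$ in $M$, since the symmetries of $N$ are restrictions of those of $M$; by uniqueness of midpoints in the exponential space $M$, it coincides with the midpoint in $M$, which therefore lies in $N$. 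Finally, suppose midpoints of points of $N$ lie in $N$, and let $N_0$ be the connected component of $o$ in $N$; by the first implication $N_0$ is exponential, so $N_0=\Exp_o(\m_N)$, using that $N$ is totally geodesic (whence $\Exp_o$ restricts to $\m_N$) and that $N_0$ and $N$ share their Lie triple system at $o$. Given $x\in N$, iterating the midpoint map against $o$ gives $\Exp_o\bigl(2^{-k}\Exp_o^{-1}(x)\bigr)\in N$ for all $k$, and these points tend to $o$, so for $k$ large one of them lies in $N_0$; thus $2^{-k}\Exp_o^{-1}(x)\in\m_N$, and since $\Exp_o$ is injective and $\m_N$ is a linear subspace, $\Exp_o^{-1}(x)\in\m_N$, i.e.\ $x\in N_0$. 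Hence $N=N_0$ is connected.

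For part~\ref{cor:subspaces-intersections-quotients-of-exponential-spaces-2}: if $N_1\cap N_2\neq\varnothing$, fix a point $o$ in it; by part~\ref{cor:subspaces-intersections-quotients-of-exponential-spaces-1} each $N_i$ is connected, hence equals $\Exp_o(\m_{N_i})$, and since $M$ is exponential $\Exp_o\colon\m\to M$ is a diffeomorphism, so $N_1\cap N_2=\Exp_o(\m_{N_1})\cap\Exp_o(\m_{N_2})=\Exp_o(\m_{N_1}\cap\m_{N_2})$. This is the diffeomorphic image of a linear subspace, hence an embedded connected submanifold, stable under the symmetries at each of its points, so it is a connected symmetric subspace, and part~\ref{cor:subspaces-intersections-quotients-of-exponential-spaces-1} makes it exponential.

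For part~\ref{cor:subspaces-intersections-quotients-of-exponential-spaces-3}: write $M\cong G/H$ with $N=R/(R\cap H)$ for a $\sigma$-invariant normal subgroup $R\subseteq G$; as $N$ is closed, $RH$ is closed, and $G/H\to G/(RH)\cong M/N$ is a locally trivial fibre bundle with fibre $N$ and a morphism of symmetric spaces. In its homotopy exact sequence $\pi_1(M)=0$ (Lemma~\ref{lem:exp-injective-implies-immersion-and-diffeotoRn}) and $\pi_0(M)$ is a point, so $\pi_1(M/N)\cong\pi_0(N)$, whence $M/N$ is simply connected if and only if $N$ is connected; moreover $M/N$ is locally exponential, its Lie triple system being a quotient of that of $M$. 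Therefore, if $M/N$ is exponential it is simply connected, so $N$ is connected and hence exponential by part~\ref{cor:subspaces-intersections-quotients-of-exponential-spaces-1}; and if $N$ is exponential it is connected by part~\ref{cor:subspaces-intersections-quotients-of-exponential-spaces-1}, so $M/N$ is simply connected and locally exponential, hence exponential by Theorem~\ref{thm:sconn-and-loc-exp-implies-exp-diffeo}. The delicate step is the implication ``if midpoints of points of $N$ lie in $N$ then $N$ is connected'' in part~\ref{cor:subspaces-intersections-quotients-of-exponential-spaces-1}: this is the one place the geometry is used essentially --- the global injectivity of $\Exp_o$, the total geodesy of symmetric subspaces, and the fact that the component $N_0$ is a neighbourhood of $o$ in $N$ compatibly with the ambient topology (automatic here, since the relevant subspaces are locally closed). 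The remaining ingredients, namely the two spectral inheritance statements and the homotopy exact sequence of $N\to M\to M/N$, are routine.
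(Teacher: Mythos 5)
Your proof is correct and follows the same overall architecture as the paper's: everything is funnelled through part~\ref{cor:subspaces-intersections-quotients-of-exponential-spaces-1}, whose core is the chain ``connected $\Rightarrow$ diffeomorphic to $\fR^n$ (Proposition~\ref{prop:connected-subspace-of-diffeotoRn-is-diffeotoRn-and-has-Gsigma-connected}) $\Rightarrow$ simply connected and locally exponential $\Rightarrow$ exponential (Theorem~\ref{thm:sconn-and-loc-exp-implies-exp-diffeo})'', and part~\ref{cor:subspaces-intersections-quotients-of-exponential-spaces-3} rests on the same two observations: spectral inheritance for the quotient Lie triple system, and ``$M/N$ simply connected iff $N$ connected'' (which the paper declares clear and you justify via the homotopy sequence of the bundle $N\to M\to M/N$). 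Where you genuinely deviate is in the two connectedness arguments. For ``midpoints of points of $N$ lie in $N$ $\Rightarrow$ $N$ exponential'', the paper shows that the whole geodesic between two points of $N$ stays in $N$ by dyadic subdivision and density, whereas you halve toward the base point and use that the component $N_0=\Exp_o(\m_N)$ absorbs a tail of the sequence $\Exp_o(2^{-k}\Exp_o^{-1}(x))$; the two arguments are of comparable strength, and both carry the same mild dependence on $N$ being embedded (resp.\ closed along the geodesic) that you correctly flag. For part~\ref{cor:subspaces-intersections-quotients-of-exponential-spaces-2}, the paper again argues that the unique ambient geodesic lies in both subspaces, while you identify $N_1\cap N_2$ with $\Exp_o(\m_{N_1}\cap\m_{N_2})$; your version has the small added benefit of exhibiting the intersection explicitly as the image of a sub-Lie triple system, at the cost of first invoking part~\ref{cor:subspaces-intersections-quotients-of-exponential-spaces-1} to base both subspaces at a common point. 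I see no gaps.
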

\begin{proof}
\begin{enumerate}[label={(\arabic*)}]
\item If $N$ is exponential, then it obviously satisfies the other two assertions.

Assume that midpoints of points in $N$ are in $N$. 
Let $x,y\in N$. 
There is a unique geodesic $c:[0,1]\to M$ joining $x$ and $y$ in $M$.
By taking recursively the midpoints of $x$ and $y$ and of their successive midpoints, we obtain that the dyadic geodesic $c(\left\{\frac{n}{2^m}\mid m\in\fN, 0\leq n\leq 2^m\right\})$ is in $N$, and by density, that the whole geodesic $c([0,1])$ is in $N$. Hence, $N$ is exponential.

Assume now that it is connected. 
Then it is simply connected by Proposition~\ref{prop:connected-subspace-of-diffeotoRn-is-diffeotoRn-and-has-Gsigma-connected}, and it is locally exponential since $M$ is. Hence it is exponential by Theorem~\ref{thm:A}.

\item Assume the intersection $N=N_1\cap N_2$ is not empty. Then any two points in $N$ are joined by a unique geodesic in $M$ which, by 
assumption, lies in both $N_1$ and $N_2$.

\item 
Every eigenvalue of $y\mapsto[y,x,x]$ for $x\in \m/\n$ is also an eigenvalue of $Y\mapsto[Y,X,X]$ for some $X\in \m$, so $M/N$ is locally exponential.
On the other hand, it is clear that $M/N$ is simply connected if and only if $N$ is connected. \qedhere
\end{enumerate}
\end{proof}

We conclude this section by giving a characterization of the exponentiality of solvable symmetric spaces in terms of \ref{ex:2dim-solvable-non-exponential}.
To this end, we first adapt the notions of Jordan--H\"older series and roots to modules of (solvable) Lie triple systems.

If $\m$ is a Lie triple system, an \emph{$\m$-module} is a vector space $V$ together with a structure of Lie triple system on $\m\oplus V$ such that $\m$ is a subsystem and $V$ is an abelian ideal (an ideal is abelian if the triple bracket is zero whenever two of its arguments are in that ideal).
An $\m$-module is \emph{simple} if it has no non-zero proper submodule.

Let $\m$ be a solvable Lie triple system over $\fC$. Then any simple $\m$-module over $\fC$ is of dimension 1.
Indeed, let $V$ be such a module and let $\g=\m+[\m,\m]$ (respectively, $\g_V=(\m+V)+[\m+V,\m+V]=\g+(V+[\m,V])$) be the standard embedding of $\m$ (respectively, of $\m+V$).
Let $\h$ be a minimal ideal of $\g_V$ contained in the abelian ideal $V+[\m,V]$.
It is of dimension 1.
And $\h+\sigma\h=V+[\m,V]$ since $\h+\sigma\h\subset V+[\m,V]$ by $\sigma$-invariance and $\h+\sigma\h\supset V+[\m,V]$ since otherwise $(\m+V)\cap(\h+\sigma\h)$ would be a proper submodule of $V$.
So either $\h=\sigma\h$ or $\h\cap \sigma\h=\{0\}$.
In both cases, $\dim V=1$.

Let $\m$ be a solvable Lie triple system over $\fR$. Then any simple $\m$-module over $\fR$ is of dimension 1 or 2.
Indeed, let $V$, $\g$, $\g_V$ and $\h$ be as in the foregoing paragraph.
Then $\h$ is of dimension 1 or 2, and either $\h=\sigma\h$ or $\h\cap \sigma\h=\{0\}$, which yields the following four possibilities:
\begin{enumerate}[label=(S\arabic*),leftmargin=*]
\item $\h=\sigma\h$ and $\dim \h=1$: $\dim V=1$ and $V$ is central in $\m+V$,
\item \label{S2} $\h=\sigma\h$ and $\dim \h=2$: $\dim V=1$ and for all $X\in \m$, $[\cdot,X,X]|_V$ has real non-positive eigenvalue,
\item $\h\cap\sigma\h=\{0\}$ and $\dim \h=1$: $\dim V=1$ and for all $X\in \m$, $[\cdot,X,X]|_V$ has real non-negative eigenvalue,
\item \label{S4} $\h\cap\sigma\h=\{0\}$ and $\dim \h=2$: $\dim V=2$ and for all $X\in \m$, $[\cdot,X,X]|_V$ has null or non-real eigenvalues.
\end{enumerate}
In the last three cases, there exists at least a generator $X\in\m$ such that $[\cdot,X,X]|_V$ has non-zero eigenvalues.

A \emph{Jordan--H\"older series} for an $\m$-module $V$ is a decreasing sequence $V=V_0\supset V_1 \supset \dots \supset V_p=\{0\}$ of submodules of $V$ such that each quotient $V_k/V_{k+1}$ is simple.
Every module for a solvable Lie triple system $\m$ has a Jordan--H\"older series.
Indeed, let $\m$ be solvable and let $V$ be an $\m$-module of dimension $n$. 
Since $\m$ is solvable, $\m+V$ also is, and so is its standard embedding $\g_V$.
Let $\h$ be a minimal $\sigma$-invariant ideal of $\g_V$ contained in $V+[\m,V]$. 
Then $W=(\m+V)\cap\h$ is a simple ideal of $\m+V$ contained in $V$, i.e., a simple submodule of $V$.
Assume by induction that any $\m$-module of dimension not greater than $n-1$ has a Jordan--H\"older series, and let $V'_0=V'\supset V'_1\supset\dots\supset V'_{n-1}=\{0\}$ be such a series for $V'=V/W$.
If $\pi:V\to V/W$ denotes the canonical projection, then setting $V_k=\pi^{-1}(V'_k)$ for $k=0,\dots,n-2$, and $V_{n-1}=W$, $V_n=\{0\}$ defines a Jordan--H\"older series for $V$.

The Jordan--H\"older theorem holds for $\m$-modules since they can be viewed as groups with operators \cite[pp. 30--31, 41]{bourbaki_algebra_1998} $(V,\m\times\m)$ with $V$ seen as an abelian group, the action being $(\m\times \m)\times V\to V:((X,Y),v)\mapsto[v,X,Y]$.

Given a solvable Lie triple system $\m$ over $\fR$, an $\m$-module $V$ of dimension $n$ and a Jordan--Hölder series $V=V_0\supset V_1 \supset \dots \supset V_p=\{0\}$ for $V$, the \emph{weights} of $V$ are defined as follows.
Let $\m_\fC$, $V_\fC$ and $V_{k,\fC}$ be the complexifications of $\m$, $V$ and $V_k$, $k=0,\dots,p$ respectively.
If, for some $k$, $V_{k,\fC}/V_{k+1,\fC}$ is not simple as an $\m_\fC$-module (this will only happen in the case \ref{S4} above), insert a submodule $V'_k$ between $V_{k,\fC}$ and $V_{k+1,\fC}$ such that $V_{k,\fC}/V'_k$ and $V'_k/V_{k+1,\fC}$ are simple.
Doing this for all $k$ yields a Jordan--Hölder series $\tilde V_l$, $l=0,\dots,n-1$ for $V_\fC$.
For each $l$, by restricting to $\tilde V_l$ and passing to the quotient $\tilde V_l/\tilde V_{l+1}$, the map $\m_\fC\to\End(V_\fC):X\mapsto [\cdot,X,X]$ defines a quadratic form $\tilde \phi_l:\m_\fC\to\End(\tilde V_l/\tilde V_{l+1})\cong \fC$.
Restricting the forms $\tilde\phi_l$ to $\m\subset \m_\fC$ yields the \emph{weights} $\phi_l:\m\to\fC$ of $V$ as an $\m$-module.
Up to the order, they are independent of the choice of Jordan--Hölder series for $V$ and of the choice of submodules $V'_k$.
When the $\m$-module is $\m$ itself with the module structure described in Example~\ref{ex:tangent-bundle}, the weights are called the \emph{roots} of $m$.

\begin{proof}[Proof of Theorem~\ref{thm:D}]
If a factor space of $M$ has a subspace isomorphic to \ref{ex:2dim-solvable-non-exponential}
then, by Corollary~\ref{cor:subspaces-intersections-quotients-of-exponential-spaces}, $M$ is not exponential.

Conversely, assume that $M$ is not exponential.
Let $\m=\m_0\supset \m_1 \supset \dots \supset \m_p=\{0\}$ be a Jordan--Hölder series for $\m$ and $\phi_l$, $l=0,\dots,\dim \m-1$ be the corresponding roots as constructed above.
Then, since $\m$ is not locally exponential, there exists a root $\phi_l$ of $\m$ corresponding to a simple quotient $\m_k/\m_{k+1}$ of the kind \ref{S2} above.
Let $\hat X\in\m$ be such that $\phi_l(\hat X)=-1$, let $\hat Y\in\m_k\setminus\m_{k+1}$, and let $X,Y$ be their respective images in $\m/\m_{k+1}$.
Then, in $\m/\m_{k+1}$, we have $[Y,X,X]=-Y$, and moreover $[Y,X,Y]=0$ since $\m_k/\m_{k+1} + [\m,\m_k/\m_{k+1}]$ is abelian.
Hence $X$ and $Y$ span a Lie triple system 
isomorphic to that of \ref{ex:2dim-solvable-non-exponential}, which by Proposition~\ref{prop:connected-subspace-of-diffeotoRn-is-diffeotoRn-and-has-Gsigma-connected} implies the result.
\end{proof}

\section{Double triangles and solvable spaces}
\label{sec:double-triangles-and-solvable-spaces}

In this section, $M$ will denote an exponential symmetric space with base point $o$, and $\gamma:M\times M\to M$ its \emph{midpoint map}: the smooth map sending two points to their unique midpoint.
\begin{defn}[\cite{qian_groupoids_1997}]
The map $\gamma_3:M\times M \times M \to M\times M \times M$ is defined by
\begin{align*}
\gamma_3(x,y,z)=(\gamma(z,x),\gamma(x,y),\gamma(y,z))
\end{align*}
for all $x,y,z\in M$.
A triple $(a,b,c)$ such that $\gamma_3(a,b,c)=(x,y,z)$ is called a \emph{double triangle of $(x,y,z)$}.
\end{defn}

\begin{ex}
\label{ex:hyperbolic-space-not-all-double-triangles}
It was shown in \cite{tuynman_areas_1999,rios_variational_2004,rios_weyl_2008} that there exist triangles $(x,y,z)$ in the hyperbolic plane that have no double triangle, so that $\gamma_3$ is not a diffeomorphism. 
If a given triangle has a double triangle, however, the latter is unique.
Seeing the hyperbolic plane as the $x_3>0$ sheet of the two-sheeted hyperboloid $(x_3)^2-(x_1)^2-(x_2)^2=1$ in $\fR^3$, the condition \cite{tuynman_areas_1999} for the existence of a double triangle of a triangle $(x,y,z)$ is that $\det(xyz)^2<1$, where $xyz$ denotes the $3\times 3$ matrix whose columns are the vectors $x$, $y$ and $z$.
\exend
\end{ex}

\begin{thm}
\label{thm:solvable-symmetric-space-gamma3-diffeomorphism}
If $M$ is solvable,
then $\gamma_3$ is a diffeomorphism. 
Moreover, for any odd natural number $n$, every $n$-gon has a unique double in the sense that the obvious generalization $\gamma_n:M^n\to M^n$ is a diffeomorphism.
\end{thm}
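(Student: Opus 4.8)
The plan is to reduce the whole statement to the following claim.

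\smallskip
\noindent\emph{Claim.} For every $h$ in the transvection group $G$ of $M$, the transformation $\phi=h\circ s_o$ of $M$ has a unique fixed point $x_h$, and $\Id-d\phi_{x_h}$ is invertible.
\smallskip

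First I would record the reduction. Unravelling the definitions, a double $n$-gon $(a_1,\dots,a_n)$ of $(x_1,\dots,x_n)$ is precisely a point $a_n$ fixed by the product $s_{x_n}\cdots s_{x_1}$, the remaining vertices being then forced by $a_i=s_{x_i}\cdots s_{x_1}a_n$. Since $G$ is exactly the set of products of an even number of symmetries and $s_{x_1}=Q(x_1)s_o$, this product equals $h s_o$ for some $h\in G$ whenever $n$ is odd, so the Claim, proved below for all $h\in G$, shows that $\gamma_n$ is bijective for every odd $n$ (in particular $\gamma_3$). It is smooth because $\gamma$ is (Theorem~\ref{thm:A}); and its inverse is smooth because, by the implicit function theorem applied to the smooth equation $a=(s_{x_n}\cdots s_{x_1})a$ --- whose differential in $a$ at the solution is $\Id-d\phi_{x_h}$, invertible by the Claim --- the vertex $a_n$, hence each $a_i$, depends smoothly on $(x_1,\dots,x_n)$. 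Thus $\gamma_n$ is a diffeomorphism. (The Claim is exactly the content of the second statement of Theorem~\ref{thm:C}: reading $h\in G$ as a product of an even number of symmetries, it contains the case $n=3$ and the case of all odd $n$.)

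Next I would prove the Claim by induction on $\dim M$, the case $\dim M=0$ being trivial. Assume $\dim M\ge 1$. By Theorems~\ref{thm:A} and \ref{thm:B}, $M\cong G/G^\sigma$ with $G^\sigma$ connected and $G$ a simply connected exponential solvable Lie group. As in the proof of Theorem~\ref{thm:sconn-and-loc-exp-implies-exp-diffeo}, pick a minimal nonzero $\sigma$-invariant connected normal subgroup $N$ of $G$: it is abelian, closed and simply connected, $N^\sigma$ is connected, and its image $M_0=N\cdot o\cong N/N^\sigma$ is a closed normal symmetric subspace which is an affine space, with Lie triple system $\n\cap\m$. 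The quotient $\hat M=M/M_0\cong\hat G/\hat G^{\hat\sigma}$ with $\hat G=G/N$ is connected, simply connected and solvable, and by Corollary~\ref{cor:subspaces-intersections-quotients-of-exponential-spaces}\ref{cor:subspaces-intersections-quotients-of-exponential-spaces-3} together with Theorem~\ref{thm:A} it is exponential; its dimension being strictly smaller, the Claim holds for $\hat M$. Now fix $h\in G$, put $\phi=h s_o$, and let $\pi\colon M\to\hat M$ be the projection. Since $\phi$ preserves $\pi$ and induces on $\hat M$ the transformation $\hat\phi=\hat h\, s_{\hat o}$, any fixed point of $\phi$ projects onto the unique fixed point $\hat x_0$ of $\hat\phi$ (which exists, with $\Id-d\hat\phi_{\hat x_0}$ invertible, by the inductive hypothesis). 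Replacing $\phi$ by a conjugate under $G$ --- which acts transitively on $M$, hence on $\hat M$ --- I may assume $\hat x_0=\hat o$; then $\hat h$ fixes $\hat o$, which by a short computation in the abelian exponential group $N$ forces $h=n_* k_*$ with $n_*\in N$ and $k_*\in G^\sigma$, so $\phi=n_* s_o k_*$. Then $\phi(M_0)=M_0$, and in the coordinate $V\mapsto\Exp_o V$ on $M_0$ ($V\in\n\cap\m$) one computes --- using that $N$ is abelian, that $k_*$ fixes $o$ and preserves $\n\cap\m$, and that $s_o$ restricts to $V\mapsto -V$ --- that $\phi|_{M_0}$ is the affine map $V\mapsto -\Ad_{k_*}V+(\log_N n_*)_\m$, with linear part $-\Ad_{k_*}|_{\n\cap\m}$. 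Hence $\phi|_{M_0}$ has a unique fixed point and $\Id-d(\phi|_{M_0})=\Id+\Ad_{k_*}|_{\n\cap\m}$ is invertible, provided $-1$ is not an eigenvalue of $\Ad_{k_*}$ on $\n\cap\m$. But $G$ is exponential, so $k_*=\exp\zeta$ with $\Ad_{k_*}=e^{\ad\zeta}$; as $\Ad_{k_*}$ preserves $\n\cap\m$, its eigenvalues there are among the $e^\mu$ with $\mu$ an eigenvalue of $\ad\zeta$ on $\mathfrak g$, and by the Dixmier--Saito condition no such $\mu$ is nonzero purely imaginary, whereas $e^\mu=-1$ forces $\mu\in i\pi(2\fZ+1)$; so $\Ad_{k_*}$ has no eigenvalue $-1$ on $\n\cap\m$. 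Therefore $\phi$ has a unique fixed point $x_h\in M_0$, and $\Id-d\phi_{x_h}$, being block triangular over $\pi$ with diagonal blocks $\Id-d(\phi|_{M_0})$ and $\Id-d\hat\phi_{\hat o}$ both invertible, is invertible. This closes the induction and proves the theorem.

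The main obstacle is the inductive step, and within it the linear algebra on the fiber: one must check that $\phi|_{M_0}$ is genuinely \emph{affine} (this relies on $N$ being abelian and exponential, so that $\exp_N$ linearizes the picture), handle carefully the normalization $\hat x_0=\hat o$ and the ensuing factorization $h=n_* k_*$, and --- crucially --- make sure the obstruction to invertibility is ``$-1$ is an eigenvalue of an $\Ad$-operator'', which exponentiality forbids, rather than ``$1$ is an eigenvalue'', which it does \emph{not} forbid (since $\ad\zeta$ may well have the eigenvalue $0$). The appearance of $-1$ rather than $1$ is the infinitesimal trace of the parity: one composes an \emph{odd} number of point symmetries, each linearizing to $-\Id$, and it is precisely this that makes solvable exponentiality sufficient.
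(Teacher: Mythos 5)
Your proof is correct and follows essentially the same route as the paper: induction on the dimension via a minimal abelian $\sigma$-invariant normal subgroup $N$, lifting the unique solution from the quotient $\hat G/\hat G^{\hat\sigma}$, and reducing the fixed-point equation on the fiber to the invertibility of $\Id+\Ad_{k}$ for some $k\in G^\sigma$. The only (harmless) differences are presentational: you normalize by conjugating the fixed point in the quotient to the base point where the paper uses explicit sections $t,s,r$, and you derive the invertibility of $\Id+\Ad_{k}$ from the absence of purely imaginary eigenvalues of $\ad\zeta$ where the paper uses that $[\m,\m]$ lies in the nilradical, so that $\Ad_k$ is unipotent.
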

\begin{proof}
Let $(G,\sigma)$ be a transvection pair, so that $M\cong G/G^\sigma$ and $G$ is exponential.
We will prove the result by induction on the dimension of $G$. 

If $G$ is $1$-dimensional, then $M$ is isomorphic to $\fR$ with the usual symmetries $s_xy=2x-y$. Hence, $\delta_3(x,y,z)=(y-z+x,z-x+y,x-y+z)$ is a smooth inverse of $\gamma_3$.

Assume now that the dimension of $G$ is greater than $1$, and let $N$ be a minimal $\sigma$-invariant normal subgroup of $G$. 
As in the proof of Theorem~\ref{thm:sconn-and-loc-exp-implies-exp-diffeo}, we can see that $N$ is a closed abelian connected and simply connected normal subgroup, and by Proposition~\ref{prop:connected-subspace-of-diffeotoRn-is-diffeotoRn-and-has-Gsigma-connected}, $N/N^\sigma$ is also simply connected.

Set $\hat G=G/N$.
Since $\hat G/\hat G^{\hat\sigma}$ is contractible, the canonical morphism $\pi:G/G^\sigma\to \hat G/\hat G^{\hat\sigma}$ (which is a fibre bundle) admits a smooth global section $s$ \cite[pp.\ 25, 55--56]{steenrod_topology_1951}.
Moreover, since $G/G^\sigma$ is exponential, there is a smooth global section $t$ of the canonical projection $\rho:G\to G/G^\sigma$ with values in $G_\sigma=\{ g\sigma(g)^{-1} \mid g\in G \}\subset G$ (it is given by $t(\Exp_oX)=Q(\Exp_o\frac{X}{2})$ where $Q$ is the quadratic representation defined in the introduction).

To summarize, we have the following commutative diagram, where the arrows on the two upper lines are group morphisms, and those on the lower line are symmetric space morphisms:
\begin{displaymath}
\xymatrix{
  N^\sigma \ar[d] \ar[r] 
  & 
  G^\sigma \ar[d] \ar[r]
  &
  G^\sigma/N^\sigma \ar[d]
  \\
  N \ar[d] \ar[r]
  &
  G \ar[d]_{\rho} \ar[r]
  & 
  G/N \ar[d] 
  \\
  N/N^\sigma \ar[r]
  &
  G/G^\sigma \ar[r]_\pi \ar@{.>}@/_/[u]_{t}
  &
  \hat G/\hat G^{\hat\sigma} \ar@{.>}@/_/[l]_s
}
\end{displaymath}

Thus the composition $t\circ s$ gives a map $\hat G/\hat G^{\hat\sigma}\to G$, and for each $x\in \hat G/\hat G^{\hat\sigma}$ the set $\{ t(s(x)).n \mid n\in N \}$ is mapped through $\rho$ onto the fiber $\pi^{-1}(x)$.

Moreover, since $N$ is abelian connected and simply connected, it is exponential so that by Proposition~\ref{prop:Benoist} there exists a smooth section $r$ of $N\to N/N^\sigma$ with values in the Lie subgroup $N'=\{ n\in N \mid \sigma(n)=n^{-1} \}$ of $N$.

With the help of the above constructions, we may uniquely represent any element $x\in G/G^\sigma$ as 
\begin{align}
\label{eq:proof-solvable-double-triangles-repr-x-gn}
\text{$x=gnG^\sigma$ where $g=t(s(\pi(x)))$ and $n=r(g^{-1}x)\in N'$.}
\end{align}

Let now $x_1,x_2,x_3\in G/G^\sigma$. 
By the induction assumption, there exists a unique $\hat x\in \hat G/\hat G^{\hat\sigma}$ such that 
$
s_{\pi(x_3)}s_{\pi(x_2)}s_{\pi(x_1)}\hat x=\hat x, 
$
and $\hat x$ is a smooth function of $(\pi(x_1),\pi(x_2),\pi(x_3))$.
We will build a unique $x\in G/G^\sigma$ depending smoothly on $(x_1,x_2,x_3)$ and such that 
\begin{align}
\label{eq:inductive-solution-double-triangle}
s_{x_3}s_{x_2}s_{x_1}x=x.
\end{align}
If $x$ is a solution of \eqref{eq:inductive-solution-double-triangle}, it must be such that $\pi(x)=\hat x$ by uniqueness of $\hat x$.
Hence, writing $x=gnG^\sigma$ with $g=t(s(\hat x))$ and $n\in N'$, solving equation \eqref{eq:inductive-solution-double-triangle} amounts to solving an equation on $n$.
Writing $x_i=g_iG^\sigma$ for some $g_i\in G$, we have
\begin{align}
\label{eq:inductive-solution-double-triangle-last-line}
s_{x_3}s_{x_2}s_{x_1} (gnG^\sigma)
&= 
g_3\sigma(g_3^{-1}g_2)g_2^{-1}g_1\sigma(g_1^{-1}g)
\sigma(n) G^\sigma 
= \tilde gn^{-1}G^\sigma , 
\end{align}
where $\tilde g=g_3\sigma(g_3^{-1}g_2)g_2^{-1}g_1\sigma(g_1^{-1}g)$.
But by \eqref{eq:proof-solvable-double-triangles-repr-x-gn} and the definitions of $g$ and $\hat x$, we have $\tilde gG^\sigma=g\tilde n G^\sigma$ for a unique $\tilde n \in N'$. 
Hence there further exists a unique $k\in G^\sigma$ such that $\tilde g=g\tilde n k$, and the right hand side in equation~\eqref{eq:inductive-solution-double-triangle-last-line} is equal to
$
g\C_{k}(n^{-1})\tilde n G^\sigma 
$,
where $\C_k(n^{-1})\in N'$ since $k\in G^\sigma$ and $n\in N'$, and where we used the fact that $N'$ is abelian.
This will be equal to $gnG^\sigma$ if and only if
$n=\C_k(n^{-1})\tilde n$. 
Replacing the $n$'s by their $\log$, we get
$
\left(\Id + \Ad_{k} \right)(n) = \tilde n .
$
Now $G$ is exponential so $k=e^X$ with $X\in \k$ nilpotent, so that
$\Id+\Ad_k$ is invertible and its inverse is polynomial in $\ad X$.
We thus have a unique solution $n=\left(\Id+\Ad_k\right)^{-1}\tilde n$ which is smooth in $(x_1,x_2,x_3)$, and consequently a unique solution $x=t(s(\hat x))nG^\sigma$ of the equation $s_{x_3}s_{x_2}s_{x_1}x=x$ with the same property.
Hence $\gamma_3$ is invertible and has a smooth inverse.
As $\gamma_3$ itself is smooth, we have a diffeomorphism.

For the second assertion of the theorem, just notice that the argument above only relied on the fact that there was an odd number of symmetries in equation~\eqref{eq:inductive-solution-double-triangle}.
\end{proof}

\begin{prop}
\label{prop:connected-symmetric-subspaces-inherit-gamma3-diffeomorphism}
If $\gamma_3$ is a diffeomorphism and if $N$ is a connected symmetric subspace of $M$, then $\gamma_3|_{N^3}$ is a diffeomorphism $N^3\to N^3$.
\end{prop}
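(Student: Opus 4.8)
The plan is to show that $\gamma_3$ carries $N^3$ into itself and that the resulting self-map $\gamma_3|_{N^3}\colon N^3\to N^3$ is a bijective local diffeomorphism, hence a diffeomorphism. Concretely I would establish three facts: (a) $\gamma_3(N^3)\subseteq N^3$; (b) $\gamma_3|_{N^3}$ is an injective local diffeomorphism onto $N^3$; (c) $\gamma_3|_{N^3}$ is proper. Granting these, $N^3$ is a connected manifold, the image of $\gamma_3|_{N^3}$ is open by (b) and closed by (c) (a proper continuous map into a locally compact Hausdorff space is closed), so connectedness forces the image to be all of $N^3$; a bijective local diffeomorphism is a diffeomorphism.

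For (a): $N$ is connected, so by Corollary~\ref{cor:subspaces-intersections-quotients-of-exponential-spaces}(1) it is exponential, and in particular the midpoint of any two points of $N$ lies in $N$; thus $\gamma$ maps $N\times N$ into $N$ and $\gamma_3$ maps $N^3$ into $N^3$. For (b), injectivity is immediate from injectivity of $\gamma_3$ on $M^3$. For the local-diffeomorphism statement, fix $p\in N^3$: since $\gamma_3$ is a diffeomorphism, $d(\gamma_3)_p$ is a linear isomorphism, and by (a) it maps the subspace $T_pN^3$ into $T_{\gamma_3(p)}N^3$; these two subspaces both have dimension $3\dim N$, so the restriction $d(\gamma_3|_{N^3})_p$ is an injective linear map between spaces of equal dimension, hence an isomorphism, and the inverse function theorem gives that $\gamma_3|_{N^3}$ is a local diffeomorphism onto $N^3$.

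The crux — and the step I expect to be the real obstacle — is surjectivity, which I would obtain from (c), whose key ingredient is that $N$ is \emph{closed} in $M$. Here exponentiality of $M$ is essential: fix $p\in N$; since $N$ is a symmetric subspace it is totally geodesic, and being exponential it equals $\Exp_p(T_pN)$; but $\Exp_p\colon T_pM\to M$ is a diffeomorphism because $M$ is exponential, and $T_pN$ is a closed linear subspace of $T_pM$, so $N$, hence $N^3$, is closed in $M^3$. Properness of $\gamma_3|_{N^3}$ then follows formally: for compact $K\subseteq N^3$, the set $(\gamma_3|_{N^3})^{-1}(K)=\gamma_3^{-1}(K)\cap N^3$ is a closed subset of the compact set $\gamma_3^{-1}(K)$ (compact since $\gamma_3$ is a homeomorphism of $M^3$), hence compact. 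Note that what is really being proved is that the double triangle of a triangle lying in $N$ again lies in $N$ — equivalently, that the fixed point of $s_xs_zs_y$ with $x,y,z\in N$ stays in $N$ — which is not obvious pointwise and which the properness argument circumvents; this is exactly the tool needed to derive, in the proof of Theorem~\ref{thm:C}, that a non-solvable exponential space cannot have $\gamma_3$ a diffeomorphism (by restricting to a hyperbolic-plane subspace, cf.\ Example~\ref{ex:hyperbolic-space-not-all-double-triangles}).
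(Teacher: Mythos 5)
Your proof is correct and follows essentially the same route as the paper's: show $\gamma_3(N^3)\subseteq N^3$ via Corollary~\ref{cor:subspaces-intersections-quotients-of-exponential-spaces}~\ref{cor:subspaces-intersections-quotients-of-exponential-spaces-1}, then observe that the image is open and closed in the connected set $N^3$. The only differences are presentational: the paper obtains closedness directly from the fact that the homeomorphism $\gamma_3$ carries the closed set $N^3$ to a closed set (where you route through properness, which rests on the same fact), and you usefully supply the justification, via $N=\Exp_p(T_pN)$, for the closedness of $N$ in $M$, which the paper merely asserts.
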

\begin{proof}
Since $N$ is connected, by Corollary~\ref{cor:subspaces-intersections-quotients-of-exponential-spaces} \ref{cor:subspaces-intersections-quotients-of-exponential-spaces-1}, $\gamma$ restricts to $N$ as a midpoint map, so that $\gamma_3(N^3)\subset N^3$. 
Obviously, $\gamma_3(N^3)$ is open in $N^3$.
But $N$ is closed in $M$, so $\gamma_3(N^3)$ is closed in $M^3$ and thus in $N^3$. 
\end{proof}

\begin{proof}[Proof of Theorem~\ref{thm:C}]
As in Lemma~\ref{lem:simply-connected-realizations}, let us realize $M$ as $\tilde G/\tilde G^\sigma$, where $\tilde G$ is the universal cover of the transvection group of $M$.
Now there exists a global Levi decomposition $\tilde G=S\ltimes R$, with $S$ and $R$ stable under $\sigma$, $S$ semisimple and $R$ the radical.
Thus $S/S^\sigma$ is a semisimple symmetric subspace of $M$.

Let us prove that for any semisimple exponential symmetric space $M'$, $\gamma_3$ is not surjective so that, by Proposition~\ref{prop:connected-symmetric-subspaces-inherit-gamma3-diffeomorphism}, $S/S^\sigma$ must be reduced to a point and $M$ must be solvable.
To realize $M'$, by Theorem~\ref{thm:A} it is sufficient to consider $(G,\theta)$ a Riemannian symmetric pair of the noncompact type.
Then there exists a symmetric subspace of $M'$ isomorphic to the hyperbolic plane
\cite[Chapter IX]{helgason_differential_1978}
for which, by Example~\ref{ex:hyperbolic-space-not-all-double-triangles}, $\gamma_3$ is not surjective. 
Proposition~\ref{prop:connected-symmetric-subspaces-inherit-gamma3-diffeomorphism} then implies that so is the case for $M'$.

The converse implication of the theorem is the content of Theorem~\ref{thm:solvable-symmetric-space-gamma3-diffeomorphism}.
\end{proof} 


\bibliographystyle{abbrv}
\bibliography{PaperMidpoints}

\end{document}